\DeclareMathAlphabet{\mathcal}{OMS}{cmsy}{m}{n}
\numberwithin{equation}{section}
\theoremstyle{plain}
\newtheorem{theorem}{Theorem}[section]
\newtheorem{lemma}[theorem]{Lemma}
\newtheorem{proposition}[theorem]{Proposition}
\newtheorem{corollary}[theorem]{Corollary}
\theoremstyle{definition}
\newtheorem{remark}[theorem]{Remark}
\newtheorem*{namedthm*}{\namedthmname}
\newcommand{\pa}{\partial}
\newcommand{\de}{\partial}
\newcommand{\om}{\omega}
\newcommand{\sqom}{\sqrt{\omega}}
\newcommand{\norm}[1]{\left\lVert #1 \right\rVert}
\def \R{\mathbb R}
\def \N{\mathbb N}
\def \O{\mathcal O}
\title[Convergence of discrete solutions to the WKE]{On the convergence rates of discrete solutions to the Wave Kinetic Equation}
\author[M. Dolce]{Michele Dolce}
\address{Institute of Mathematics, EPFL, Station 8, 1015 Lausanne, Switzerland}
\email{michele.dolce@epfl.ch}
\author[R. Grande]{Ricardo Grande}
\address{International School for Advanced Studies (SISSA), Via Bonomea 265, 34136, Trieste, Italy}
\email{rgrandei@sissa.it}
\subjclass[2020]{35B40, 45G05, 35Q55}
	\keywords{Wave kinetic equation, wave turbulence, asymptotic behavior}
\begin{document}
	\maketitle
  \begin{center}
  \vspace{-.5cm}
     \emph{``Dedicated to Pierangelo Marcati on the occasion of his 70th birthday"}
 \end{center}
	\begin{abstract}
	In this paper, we consider the long-term behavior of some special solutions to the Wave Kinetic Equation (WKE). This equation provides a mesoscopic description  of wave systems interacting  nonlinearly via the cubic NLS equation. Escobedo and Vel\'azquez showed that, starting with initial data given by countably many Dirac masses, solutions remain a linear combination of countably many Dirac masses at all times. Moreover, there is convergence to a single Dirac mass at long times. The first goal of this paper is to give quantitative rates for the speed of said convergence. In order to study the optimality of the bounds we obtain, we introduce and analyze a toy model accounting only for the leading order quadratic interactions.
	\end{abstract}
\section{Introduction}

In recent years, there has been an increasing interest in understanding the average behavior of out-of-equilibrium systems of many waves undergoing weakly nonlinear interactions. A fundamental example of such a system is given by the cubic Schr\"odinger equation. 

The kinetic formalism for such wave system, known as wave kinetic theory, consists in studying the evolution of the variance of the Fourier coefficients of such wave systems in the kinetic limit (i.e. as their size grows and the strength of the interactions diminishes), see \cite{nazarenko2011wave} for details. This variance, upon rescaling in time, has been shown to satisfy the Wave Kinetic Equation (WKE):
\begin{equation}\label{eq:WKE_noniso}
\pa_t n (t,\xi)  = \mathcal{K} (n(t,\cdot )), \qquad \xi\in\R^3,
\end{equation}
where 
\[
\mathcal{K}(n)(\xi) =\int_{\substack{(\R^3)^3\\ \{\xi=\xi_1-\xi_2+\xi_3\}}}\delta_{\R}(|\xi_1|^2 - |\xi_2|^2+|\xi_3|^2-|\xi|^2)\, n_1 n_2 n_3 n \left( \frac{1}{n} - \frac{1}{n_1} + \frac{1}{n_2} - \frac{1}{n_3} \right) \,  d\xi_1\, d\xi_3 
\]
with $n_j = n(\xi_j)$.


Kinetic equations for wave systems first appeared in the work of Peierls \cite{Peierls}, Nordheim \cite{nordheim1928kinetic} and in the work of Hasselman in the context of water waves \cite{Hasselmann1,Hasselmann2}. A rigorous mathematical derivation was only recently achieved, starting with the work of Buckmaster, Germain, Hani, Shatah \cite{BGHS}, that of Collot and Germain \cite{CollotGermain2}, and culminating with the recent works of Deng and Hani \cite{DengHani,DengHani2,DengHani3,DengHani4,DengHani5}, where a full derivation is obtained. Other wave systems have also recently been considered, see for instance \cite{StaffilaniTran,HRST}.

Despite the rigorous justification of the WKE, many questions remain unanswered regarding the behavior of solutions to \eqref{eq:WKE_noniso}. The study of the well-posedness and long-term behavior of certain solutions to the WKE \eqref{eq:WKE_noniso} was initiated by Escobedo and Vel\'azquez in \cite{escobedo2015theory}, as well as in \cite{GIT}. In their work, they consider radial initial data in 3D, where one can explicitly integrate the delta function in \eqref{eq:WKE_noniso} in the angular variables, leading to the \emph{isotropic} WKE:
\begin{equation}
	\label{eq:WKEstrong}
\begin{split}
		\de_t g_1&=\int_{D(\omega_1)} \Phi \left[ \left(\frac{g_1}{\sqrt{\om_1}}+\frac{g_2}{\sqrt{\om_2}}\right)\frac{g_3g_4}{\sqrt{\om_3\om_4}}-\left(\frac{g_3}{\sqrt{\om_3}}+\frac{g_4}{\sqrt{\om_4}}\right)\frac{g_1g_2}{\sqrt{\om_1\om_2}} \right]d\om_2d\om_3d\om_4,\\
		g_1 \mid_{t=0} & =g^{in}(\om_1), \qquad g_i d\om_i=g(t,d\om_i),\\
		\Phi&=\min\{\sqom_1,\sqom_2,\sqom_3,\sqom_4\},\\
		D(\om_1)&=\{\om_3\geq 0,\om_4\geq 0; \om_3+\om_4\geq \om_1\}, \qquad \om_1\geq 0.
\end{split}
\end{equation} 
Here $g(t,\omega)=|\xi|\, n(t,|\xi|^2)$ and $\omega = |\xi|^2$, where $n$ is the solution to \eqref{eq:WKE_noniso}. It is convenient to work with $g$ so that it can be interpreted as a density of particles in the space $\{\omega\geq 0\}$ \cite{escobedo2015theory}. One can thus define 
\[
\text{Mass:} \quad M=\int_{\mathbb{R}_+}g(t,d \omega), \qquad \text{Energy:} \quad E=\int_{\mathbb{R}_+}\omega g(t,d \omega).
\]
Both quantities above are conserved as long as they are both initially finite.

Escobedo and Vel\'azquez then consider the weak formulation of the equation \eqref{eq:WKEstrong}, namely
\begin{equation}\label{eq:WKE}
\begin{split}
		\frac{d}{dt} \left( \int_{\R_{+}} \varphi(t,\om) g(t,d\om) \right)  = & \ \int_{\R} \partial_t \varphi(t,\omega)\, g(t,d\omega) \\
	 & + \int_{\R_{+}^3} \Phi \, \frac{g_1 g_2 g_3}{\sqrt{\om_1 \om_2 \om_3}} \,[\varphi_4 +\varphi_3 - \varphi_2 -\varphi_2]\, d\om_1 d\om_2 d\om_3,\\
		\om_4  = & \ \om_1+\om_2-\om_3\geq 0, \qquad \varphi_i=\varphi(\omega_i),
\end{split}
\end{equation}
almost everywhere for any test function $\varphi\in C^2_c ([0,T)\times \R_{+})$.

One may  show that \eqref{eq:WKE} is globally well-posed in a space of Radon measures $\mathcal{M}_{\rho}$ defined in \eqref{eq:defMrho} below.
Moreover, it is possible to study the long-term behavior of such measure solutions and, among other results, Escobedo--Vel\'azquez prove that \cite{escobedo2015theory}:
\begin{itemize}
	\item If the (conserved) mass $M=\int_{\mathbb{R}^+} g^{in}d\omega$ is finite, then 
	\begin{equation}\label{eq:conv_delta}
	g(t,\cdot)\rightharpoonup \delta_{R^*} \qquad \mbox{as}\ t\rightarrow\infty,
	\end{equation}
	 with $R^*=\mathrm{inf} A^*$ where the set $A^*$ is defined in \eqref{eq:an}-\eqref{eq:a*}.
	\item If $\mathrm{supp}(g^{in})\subseteq \mathbb{N}$, then 
	\begin{equation}\label{eq:disc_dynamics}
	\mathrm{supp}(g(t,\cdot))\subseteq\mathbb{N} \qquad \mbox{for any}\ t>0.
	\end{equation}
\end{itemize}

The goal of this paper is to quantify the speed of the convergence in \eqref{eq:conv_delta} in a special case where we start with energy concentrated on a set of discrete frequencies away from the origin. In this setting, we first observe an instantaneous spreading of energy towards all (discrete) frequencies, before the solution converges to a single Dirac mass concentrated at $R^*$. These complicated dynamics were first studied qualitatively in \cite{escobedo2015theory} in a more general scenario. The purpose of this article is to present some quantitative results in the special case of initial data displaying discrete dynamics.

\subsection{Statement of results}

We consider the case $\mathrm{supp}(g^{in})\subseteq \mathbb{N}$. It is therefore useful to introduce a set of test functions $\varphi_n \in C^\infty_c(\mathbb{R}_+)$ such that $\mathrm{supp}(\varphi)\subset B(n,1/2)$ and define 
 \begin{equation}
	\label{def:Fn}
	F_n (t):= \int_{\R_{+}} \varphi_n (\om ) g(t,d\om) .
\end{equation}
Since $\mathrm{supp}(g(t,\cdot))\subseteq \mathbb{N}$ by \eqref{eq:disc_dynamics}, the functions $F_n$ fully describes the dynamics of $g$ solving \eqref{eq:WKE}. By a direct inspection of the weak formulation \eqref{eq:WKE}, we will derive the (infinite) system of ODE's describing the evolution of $F_n$. The particular structure of the equations is not relevant for the statement of the main result, but the equations of interest can be found in Lemma \ref{lem:tedious_equations}. Our first result, proved in Section \ref{sec:main}, is the following:
\begin{theorem}
	\label{th:main}
	 Let $g^{in}=M_1\delta_1+\sum_{j=2}^{\infty} m_j\delta_j$ be the initial data of \eqref{eq:WKEstrong} with $(m_j)_{j=2}^{\infty}\in \ell^{1,r}(\R_{+})$ with $r>1$ (see  \eqref{eq:l11}). Define  
  \[
  M_2:=\sum_{j=2}^{\infty} m_j, \qquad E=\sum_{j=1}^{\infty} j\, m_j =: M_1+E_2,
  \] 
  and, without loss of generality, assume $M_1+M_2=1$.
  Then, there exists $t_0>0$ such that for any $t> t_0$ the following inequalities hold true: 
\begin{align}\label{bd:F2}
 \frac{c_2}{t-t_0+3E/b_1(t_0)+C_2}\leq F_2 (t) \leq 1-F_1(t)\leq \frac{c_1(t_0)}{\sqrt{b_1(t_0)(t-t_0)+3E}},
	\end{align} 
where $c_1(t_0),\, b_1(t_0)$ are given in \eqref{eq:c1} whereas $c_2,C_2$ are explicitly computable. Moreover, if $M_1\geq 3E_2/19$ then $t_0=0$. 
\end{theorem}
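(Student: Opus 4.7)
The plan is to reduce the problem to a pair of sharp differential inequalities for $F_1(t)$ and $F_2(t)$. First I would invoke Lemma \ref{lem:tedious_equations} to translate the weak formulation \eqref{eq:WKE} into a countable ODE system for $(F_n)_{n\geq 1}$, and extract from it the two conservation laws $\sum_{n\geq 1} F_n(t) = M_1+M_2 = 1$ and $\sum_{n\geq 1} n F_n(t) = E$. In particular, $1-F_1(t) = \sum_{n\geq 2} F_n(t)$, so the first and third inequalities in \eqref{bd:F2} are really two-sided bounds on the same object.

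For the upper bound on $1-F_1$, the key step is to produce a \emph{cubic} lower bound on the flux into the ground state of the form
\[
\dot F_1(t) \;\geq\; \frac{b_1(t_0)}{E}\,\bigl(1-F_1(t)\bigr)^3, \qquad t\geq t_0.
\]
A bound of this kind is structurally natural: the collision operator is cubic in $g$, and the factor $E^{-1}$ encodes the cost, measured in energy, of funneling mass into $\omega=1$ through the intermediate modes. Once this is in place, setting $Y(t):=(1-F_1(t))^{-2}$ gives $\dot Y\geq 2b_1(t_0)/E$, and integrating from $t_0$ to $t$ yields the advertised bound $1-F_1(t)\leq c_1(t_0)/\sqrt{b_1(t_0)(t-t_0)+3E}$, with $c_1(t_0)$ absorbing the initial value $Y(t_0)$.

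For the lower bound on $F_2$, I would bound the outgoing flux from mode $2$ by a quadratic term in $F_2$ plus a controllable remainder. Concretely, the binary-type loss terms produce a contribution $-C_* F_2(t)^2$, while the multilinear terms coupling $F_2$ to $F_n$ for $n\geq 3$ are controlled by $1-F_1-F_2\leq 1-F_1$, which in turn is integrable in time by the upper bound just proved. This yields $\dot F_2(t)\geq -C_* F_2(t)^2 - R(t)$ with $\int_{t_0}^{\infty} R(s)\,ds <\infty$, and an ODE comparison on $1/F_2$ then gives $F_2(t)\geq c_2/(t-t_0+3E/b_1(t_0)+C_2)$. This makes transparent why $3E/b_1(t_0)$ enters both bounds: it is the integration constant produced by the coupling between the two inequalities.

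The main obstacle will be extracting a uniform constant $b_1(t_0)>0$ in the cubic lower bound for $\dot F_1$. Because the collision kernel is weighted by $\Phi=\min_j\sqrt{\omega_j}$, only specific quartets contribute effectively to the scattering into $\omega=1$, and one must show that a positive fraction of the tail mass $\sum_{n\geq 2}F_n$ consistently sits at \emph{low} indices (so that $\Phi$ is not degenerate there); the weighted $\ell^{1,r}$ hypothesis with $r>1$ is what guarantees this by precluding mass escape to arbitrarily high frequencies. The threshold $M_1\geq 3E_2/19$ is the explicit smallness condition that makes the cubic feedback active already at $t=0$, allowing $t_0=0$; in the complementary regime, a preliminary monotonicity argument on a low-moment Lyapunov functional produces a finite warm-up time $t_0$ after which $F_1(t_0)$ is bounded below by a universal constant, thereby launching the nonlinear mechanism above.
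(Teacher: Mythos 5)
Your high-level architecture matches the paper's: reduce to two differential inequalities, a cubic lower bound on $\dot F_1$ integrated to give the $t^{-1/2}$ upper bound on $1-F_1$, and an ODE comparison for $F_2$ (equivalently $H_2=F_2^{-1}$) to get the $t^{-1}$ lower bound. But two of the technical claims you assert as ``structurally natural'' are precisely the points where the paper has to work, and one of them is outright false as you state it.

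First, you never derive the cubic lower bound on $\dot F_1$; you only describe it as natural. The paper obtains it by applying the convexity identity of Lemma \ref{thm:monotonicity} to the specific test function $\varphi_1(\omega)=(3-2\omega)_+$, carefully analyzing the support constraints (which force $\omega_-=1$, $\omega_0=\omega_+>1$ in the only surviving quartets), and then \emph{interpolating via H\"older} between the mass $\int g$ and the energy $\int \omega g$ to replace the resulting quantity $\bigl(\int_{\N\setminus\{1\}}\omega^{-1/2}g\bigr)^2$ by $(1-F_1)^3/E$. That interpolation step (exploiting conservation of energy, hence the $\ell^{1,r}$, $r>1$ hypothesis) is exactly what fixes a gap in the original Escobedo--Vel\'azquez argument; skipping it means the $E^{-1}$ factor you invoke has no actual provenance. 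Your form of the inequality ($\propto b_1(t_0)/E$) is also off: $b_1(t_0)=2F_1(t_0)(1-F_1(t_0))^2$ is an integration constant, not the coefficient of $(1-F_1)^3$, which is $F_1(t_0)/(3E)$.

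Second, and more seriously, your reduction $\dot F_2\geq -C_*F_2^2-R(t)$ with $\int_{t_0}^{\infty}R<\infty$ does not hold. The coupling term in $\dot H_2$ is multiplicative, $H_2(U_2-Q_2)$, and one only gets $U_2-Q_2\leq\tfrac18(1-F_1)^2\lesssim 1/(b_1t+3E)$ from the upper bound; this is \emph{not} integrable (its time integral grows like $\alpha\log t$ with $\alpha=c_1^2/(8b_1)$). The correct argument exponentiates this log to get the factor $\bigl((b_1t+3E)/(b_1s+3E)\bigr)^\alpha$ inside Duhamel's formula, and the exponent $\alpha$ then governs whether the lower bound is $t^{-1}$ or $t^{-\alpha}$ with $\alpha>1$. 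This is precisely where the threshold $M_1\geq 3E_2/19$ comes from (it forces $\alpha\leq 1$ at $t_0=0$), not from ``activating the cubic feedback'' as you suggest; the cubic feedback works for any $t_0$ with $F_1(t_0)>0$, and a separate Lyapunov functional argument for the warm-up is unnecessary because $F_1$ is monotone nondecreasing by convexity, so $F_1(t_0)$ is eventually as large as needed. As written, your ``integrable remainder'' shortcut would give the $t^{-1}$ lower bound unconditionally, which is strictly stronger than what the paper proves and not justified by the estimates available.
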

Notice that, being the mass conserved, we have $F_k(t)\leq \sum_{j=2}^{\infty} F_j(t)=1-F_1(t)$ for all $k\geq 2$. Therefore, the upper bound in Theorem \ref{th:main} is true for all $F_k$ with $k\geq 2$, whereas we are only able to prove the lower bound for $F_2$.

Thanks to the result above, we know that, if we wait long enough or we start with $M_1$ large enough, the convergence towards the Dirac mass at $\{1\}$ is at least $\mathcal{O}(t^{-1/2})$ but cannot be faster than $\mathcal{O}(t^{-1})$.
\begin{remark}
If we set $t_0=0$ but $M_1<3E_2/19$ then we need to change the lower bound in \eqref{bd:F2} to $\O (t^{-\alpha})$ with a time-rate $\alpha>1$, explicitly given in \eqref{def:alpha}. This suggests that the rate of convergence could be faster for small times and slow down as mass concentrates at $\{1\}$. Note that the convergence is at most polynomial for all times.
\end{remark}

\begin{remark}
    If the lower bound for $1-F_1$ in \eqref{bd:F2} were sharp, then one could derive lower and upper bounds for the speed of convergence of all the functions $F_n$ for $n$ large enough. In such case, they would decay as $\O_n (t^{-1})$. This is the content of \Cref{prop:cond} in \Cref{sec:main}.
\end{remark}

In order to understand whether there exist solutions exhibiting a decay that saturate the lower bound in \eqref{bd:F2}, we propose a toy model where we only keep the terms in \eqref{eq:WKE} involving at least one interaction with the leading term $F_1$. Moreover, we replace all terms $F_1$ by its limit $1$. In Section \ref{sec:toy_model}, we show that these reductions give rise to the following quadratic toy model:
\begin{equation}\label{eq:intro_toy_model}
\frac{d}{dt}F_n = 4\,  \frac{F_n}{\sqrt{n}} \, \frac{F_{2n-1}}{\sqrt{2n-1}} - 4\, \frac{F_n}{\sqrt{n}}\, \sum_{k=2}^{n-1} \frac{F_k}{\sqrt{k}}- 2 \left(\frac{F_n}{\sqrt{n}}\right)^2+ 2\sum_{k=n+1}^{\infty} \frac{F_kF_{k+1-n}}{\sqrt{k(k+1-n)}}.
\end{equation}
We then look for self-similar solutions of the form
\begin{equation}\label{eq:intro_ansatz}
F_n(t) = \beta_n\, \frac{\sqrt{n}}{t}, \qquad \text{for } \beta_n\in [0,\infty),\ 2\leq n\in \N.
\end{equation}
Analogous \emph{ansatzs} in the continuous setting are common in the literature, see for instance the related works of Kierkels and Vel\'azquez \cite{KV,KV2} in the WKE context.

In such a toy model, positivity of solutions cannot be expected to hold anymore, unlike in \eqref{eq:WKE}. In fact, the question of existence of solutions \eqref{eq:intro_ansatz} with strictly positive $\beta_n$ remains an interesting open question, which we answer with a further reduction. Indeed, our next result consists of showing the existence of strictly positive solutions for a \textit{truncated} version of \eqref{eq:intro_toy_model}, since positivity is a key and physical feature of the solutions to the full WKE \eqref{eq:WKEstrong}. In \Cref{sec:toy_model}, we show that simple truncations of \eqref{eq:intro_toy_model} do not admit positive solutions. We thus  consider:
\begin{equation}\label{eq:intro_truncation}
    -\sqrt{n}\, \beta_n = 4\, \beta_n \, \beta_{2n-1} - 4\, \beta_n \, \sum_{k=2}^{n-1}\beta_k - 2 \beta_n^2 + 2\sum_{k=n+1}^{3N-2} \beta_k \beta_{k+1-n} \qquad \mbox{for}\ n=2,\ldots, N,
\end{equation}
where $N\in\N$, $N\geq 2$, may be as large as desired. We then have the following

\begin{theorem}\label{thm:toy_model}
 Fix any $N\in\N$ with $N\geq 4$. Fix $\lambda_1, \lambda_2>0$ such that 
    $\lambda_1\lambda_2 > N/4$.
Then there exists some $\delta_0 (N,\lambda_1, \lambda_2)>0$ such that for all $\delta<\delta_0$, there exists a solution to \eqref{eq:intro_truncation} (which solves \eqref{eq:intro_toy_model} for $n\leq N$ with the ansatz \eqref{eq:intro_ansatz}) such that 
\begin{equation}
    \begin{split}
        \beta_2 & = \sqrt{\lambda_1\lambda_2+ 1/8}+ \sqrt{2}/4 + \O ( \sqrt{N} \,\delta\, \max\{ \lambda_1,\lambda_2\}),\\
         \beta_{2N} & = \lambda_1 + \O (\delta),\\
         \beta_{2N+1} & = \lambda_2 + \O (\delta),\\
        0<\beta_j & =  \O ( \sqrt{N} \,\delta\, \max\{ \lambda_1,\lambda_2\}) \qquad j=3,\ldots,N,\\
        0<\beta_j & =  \O ( \delta) \qquad j=2N+2,\ldots,3N-2,\\
         \beta_{j} & = 0 \qquad \mbox{otherwise}.
    \end{split}
\end{equation}
\end{theorem}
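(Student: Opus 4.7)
Proof plan: The strategy is to view \eqref{eq:intro_truncation} as a smooth perturbation of an explicitly solvable ``unperturbed'' state and then invoke an implicit function theorem (equivalently, a contraction-mapping argument) to obtain a solution with the desired positivity and asymptotics.

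\emph{Step 1 (Unperturbed solution).} Set $\bar{\beta}_{2N}=\lambda_1$, $\bar{\beta}_{2N+1}=\lambda_2$, $\bar{\beta}_j=0$ for every other $j\neq 2$, and $\bar{\beta}_2=b_0:=\sqrt{2}/4+\sqrt{\lambda_1\lambda_2+1/8}$, the positive root of $2b^2-\sqrt{2}b-2\lambda_1\lambda_2=0$. The $n=2$ equation in \eqref{eq:intro_truncation} reduces exactly to this quadratic, since the only surviving pair in the source sum is $\bar{\beta}_{2N+1}\bar{\beta}_{2N}=\lambda_1\lambda_2$ (at $k=2N+1$). For $n=3,\ldots,N$ both sides vanish identically in this ansatz, as every term carries either a factor $\bar{\beta}_n=0$ or a vanishing pair $\bar{\beta}_k\bar{\beta}_{k+1-n}$.

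\emph{Step 2 (Linearization).} For $\delta>0$ fix positive $\eta_{2N+2},\ldots,\eta_{3N-2}$ and corrections $c_1,c_2$, and put $\beta_{2N}=\lambda_1+\delta c_1$, $\beta_{2N+1}=\lambda_2+\delta c_2$, $\beta_j=\delta\eta_j$ for $j=2N+2,\ldots,3N-2$, while keeping $\beta_j=0$ for every other $j>N$. Writing $\beta_n=\bar{\beta}_n+\delta\tilde{\gamma}_n$ for $n=2,\ldots,N$ and substituting into \eqref{eq:intro_truncation} yields
\[
L\,\tilde{\gamma} \;=\; S \;+\; \delta\,R(\tilde{\gamma};c,\eta),
\]
where $L\in\R^{(N-1)\times(N-1)}$ is the Jacobian of the residual at $\bar{\beta}$, $S$ is a source vector linear in $(c_1,c_2,\eta_j)$ with coefficients in $\{\lambda_1,\lambda_2\}$, and $R$ is a smooth nonlinear remainder. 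A direct computation shows $L$ is upper bidiagonal, with $L_{nn}=\sqrt{n}-4b_0$, $L_{n,n+1}=2b_0$ for $n\geq 3$, and $L_{2,3}=6b_0$ (all other entries vanish thanks to $\bar{\beta}_n=0$ for $n\geq 3$). The hypothesis $\lambda_1\lambda_2>N/4$ gives $4b_0>\sqrt{2}+2\sqrt{N}$, so $|L_{nn}|=4b_0-\sqrt{n}\geq 4b_0-\sqrt{N}>\sqrt{N}$ uniformly in $n\leq N$; hence $L$ is invertible and $L^{-1}$ is computed by explicit back-substitution.

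\emph{Step 3 (Positivity and closure).} A direct calculation gives $S_N=0$, $S_{N-1}=-2\eta_{3N-2}\lambda_1$, $S_n=-2[\eta_{2N+n-1}\lambda_1+\eta_{2N+n}\lambda_2]$ for $3\leq n\leq N-2$, and $S_2=-2[c_1\lambda_2+c_2\lambda_1+\eta_{2N+2}\lambda_2]$. Choosing all $\eta_j>0$ (and $c_1,c_2\geq 0$) makes every $S_n\leq 0$. Back-substitution starting from $\tilde{\gamma}_N$ yields
\[
\tilde{\gamma}_n \;=\; \frac{-S_n+2b_0\,\tilde{\gamma}_{n+1}}{4b_0-\sqrt{n}}\;\geq\;0,
\]
with strict positivity through the stated range. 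Since $2b_0/(4b_0-\sqrt{N})<1$ (again by $\lambda_1\lambda_2>N/4$), the recursion is contractive, producing $|\tilde{\gamma}_n|\lesssim\sqrt{N}\max\{\lambda_1,\lambda_2\}$. Finally, $\tilde{\gamma}\mapsto L^{-1}(S+\delta R(\tilde{\gamma}))$ is a contraction on the ball of radius $C\sqrt{N}\max\{\lambda_1,\lambda_2\}$ provided $\delta<\delta_0(N,\lambda_1,\lambda_2)$ is sufficiently small; its unique fixed point is the sought solution, and the claimed asymptotic expansions follow from the back-substitution formulas together with the IFT-level estimate on the $\delta R$ correction.

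The principal obstacle is the interaction between the rigid upper-bidiagonal structure of $L$ and the quantitative control of $L^{-1}$: the bidiagonality depends on $\bar{\beta}_n=0$ for $3\leq n\leq N$, so any nonzero contribution there would couple distant indices through the loss term $-4\beta_n\sum\beta_k$ and the source sum, and the spectral gap $|L_{nn}|\gtrsim\sqrt{N}$ — precisely what $\lambda_1\lambda_2>N/4$ supplies — is what allows both the back-substitution to be contractive and the fully nonlinear fixed-point step to close uniformly in $N$; preserving positivity of $\tilde{\gamma}_n$ after incorporating the $\delta R$ term is where the fine choice of $\delta_0$ as a function of $N,\lambda_1,\lambda_2$ is fixed.
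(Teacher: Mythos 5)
Your route---fixing the special solution $\bar{\boldsymbol{\beta}}$, $\bar{\boldsymbol{\lambda}}$, linearizing around it, exploiting the upper--triangular structure of the Jacobian to solve by back-substitution, and closing via a contraction---is exactly the implicit-function-theorem argument the paper uses, just written in fixed-point form; moreover your entries $L_{2,3}=6b_0$ and $S_N=0$ are actually more accurate than the corresponding entries printed in the paper's \eqref{eq:jacobian2}--\eqref{eq:jacobian_lambda2}, where the $(1,2)$ entry of $J_{\boldsymbol{\beta}}f$ and the entire matrix $J_{\boldsymbol{\lambda}}f$ are computed incorrectly (for example $\partial f_N/\partial\beta_{3N-2}=-2\beta_{2N-1}=0$ under \eqref{eq:beta_zeroes}, not $-\lambda_1^0$).

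There is, however, a genuine gap in your final claim of ``strict positivity through the stated range.'' Your own computation $S_N=0$ together with the back-substitution forces $\tilde\gamma_N=0$, and this is not an artifact of linearization: under the truncation \eqref{eq:beta_zeroes} every term in $\sum_{k=N+1}^{3N-2}\beta_k\beta_{k+1-N}$ contains a zeroed index (indeed $k+1-N\in[N+2,2N-1]$ for $k\in[2N+1,3N-2]$), so the $n=N$ equation of \eqref{eq:intro_truncation} factors exactly as
\[
\beta_N\Bigl(\sqrt{N}-4\sum_{k=2}^{N-1}\beta_k-2\beta_N\Bigr)=0.
\]
Near the special solution, $\sum_{k=2}^{N-1}\beta_k\approx\gamma>\sqrt{N}/4$, so the second factor is strictly negative and $\beta_N=0$ is forced exactly, not merely $O(\delta^2)$. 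Thus the positivity assertion in the theorem can hold only for $j=3,\ldots,N-1$, and $\beta_N$ must in fact vanish. You should flag this rather than sweep it under ``strict positivity through the stated range''; note that the paper's proof has the same blind spot, traceable to the erroneous $(N-1,N-1)$ entry in its formula for $J_{\boldsymbol{\lambda}}f$, which led it to believe the last component of $J_{\boldsymbol{\lambda}}\boldsymbol{\beta}\cdot(\boldsymbol{\lambda}-\boldsymbol{\lambda}^0)$ is strictly positive when it is identically zero.
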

The result in the theorem above is an example of a solution whose leading order terms are $\beta_2,\beta_{2N},\beta_{2N+1}$. This suggests that there could be self-similar solutions to \eqref{eq:intro_toy_model} of the form \eqref{eq:intro_ansatz}. In fact, our long-term goal is not only to construct such solutions on the toy model \eqref{eq:intro_toy_model}, but to carry out  a nonlinear perturbative argument for the full \eqref{eq:WKE} around this ``approximate'' solution found in Theorem \ref{thm:toy_model}.

%

\subsection{Outline}

The article is organized as follows. In \Cref{sec:system}, we discuss some background results and present the discrete WKE associated to \eqref{eq:WKE} with initial data given by a linear combination of Dirac masses. In \Cref{sec:main} we prove \Cref{th:main}. In \Cref{sec:toy_model}, we introduce our toy model, discuss possible truncations and prove \Cref{thm:toy_model}. Finally, in \Cref{sec:appendix}, we give the derivation the discrete WKE from \eqref{eq:WKE}.

\subsection{Notation}

The set of natural numbers $\N$ is taken without $0$, namely $\N=\{1,2,\dots\}$, and we define $\R_{+}=[0,\infty)$. When the index set in a sum consists only of non-positive indices, we consider that sum to be zero, e.g. $\sum_{k=1}^{n-1} a_k=0$ whenever $n\leq 1$. 


We let $\ell^{1,r}(\R_{+})$, $r\geq 0$, be the Banach space of sequences $(m_j)_{j=1}^{\infty}$, $m_j\geq 0$, with the norm:
\begin{equation}\label{eq:l11}
\sum_{j=1}^{\infty} j^r\,m_j <\infty.
\end{equation}

Finally, we consider the space $\mathcal{M}_{\rho}$ of non-negative Radon measures $\mu$ such that
\begin{equation}\label{eq:defMrho}
 \norm{\mu}_{\rho}=\sup_{R>1} \frac{1}{(1+R)^{\rho}}\, \frac{1}{R}\,\int_{R/2}^R \mu(d\omega) + \int_0^1 \mu(d\omega) <\infty.
 \end{equation}
In this note we will consider initial data $\mu_0$ in $\mathcal{M}_{\rho}$ with some $\rho<-2$, which guarantees a finite and conserved energy. This is equivalent to requiring $(m_j)_{j\in\N}\in\ell^{1,r}(\R_{+})$ with $r>1$, as stated in \Cref{th:main}.

We will often omit the differential in some integrals when it is clear from the context, e.g.
\[
\int_0^{\infty} \mu(t,d\om)=\int_0^{\infty} \mu(t).
\]

\section{Previous results and discrete WKE}\label{sec:system}

In this section, we summarize a few results in \cite{escobedo2015theory} which will be useful in the rest of the paper. Furthermore, we present the equations satisfied by $F_n$ defined in \eqref{def:Fn}. A full proof of the derivation, which is technically simple yet computationally tedious, is given in \Cref{sec:appendix}.

First of all, by \cite[Proposition 2.28]{escobedo2015theory} we know that if $g^{in}\in \mathcal{M}_{\rho}$ with $\rho<-2$ then the weak solution to \eqref{eq:WKEstrong} has conserved and finite energy and mass for all times. As mentioned after \eqref{eq:defMrho}, we know that our initial data in Theorem \ref{th:main} is in $\mathcal{M}_\rho$ with $\rho<-2$ and therefore we always have finite and conserved mass and energy. 

Then we need two key results that will allow us to prove bounds on $F_1(t)$, and they are the foundation of our analysis for $F_n (t)$ for $n>1$. The first one is  a combination of  \cite[Proposition 2.22]{escobedo2015theory} and \cite[Lemma 2.25]{escobedo2015theory}.

\begin{lemma}\label{thm:monotonicity} 
	Suppose that $g\in \mathcal{M}_{\rho}$.
 Then, for any $\varphi\in C_b^2 (\R_{+})$,
\begin{equation}
	\label{eq:mono}
\int_{\R_{+}^3} \Phi\, \left[ \left(\frac{g_1}{\sqrt{\om_1}}+\frac{g_2}{\sqrt{\om_2}}\right)\frac{g_3g_4}{\sqrt{\om_3\om_4}}-\left(\frac{g_3}{\sqrt{\om_3}}+\frac{g_4}{\sqrt{\om_4}}\right)\frac{g_1g_2}{\sqrt{\om_1\om_2}} \right] \, \varphi_1 
= \int_{\R_{+}^3} \frac{g_1g_2g_3}{\sqrt{\om_1\om_2\om_3}} \mathcal{G}_{\varphi}
\end{equation}
where both integrals are in $d\omega_1d\omega_2d\omega_3$ and the following notation is used:
\begin{align}
 \label{def:G}\mathcal{G}_{\varphi} (\om_1,\om_2,\om_3) & = \frac{1}{3} \, \left[\sqrt{\om_{-}} \mathcal{H}^1_{\varphi}(\om_1,\om_2,\om_3) + \sqrt{(\om_0+\om_{-}-\om_{+})_{+}} \, \mathcal{H}^2_{\varphi} (\om_1,\om_2,\om_3)\right],\\
\label{def:H2}\mathcal{H}_{\varphi}^1 (\om_1,\om_2,\om_3)& =\varphi(\om_{+}+\om_{0}-\om_{-}) +\varphi(\om_{-}+\om_{+}-\om_{0}) - 2\varphi(\om_{+}),\\
\label{def:H1}\mathcal{H}_{\varphi}^2 (\om_1,\om_2,\om_3)& =\varphi(\om_{+}) + \varphi(\om_{-}+\om_{0}-\om_{+}) - \varphi(\om_{0}) - \varphi(\om_{-}),
\end{align}
and 
\begin{align*}
\om_{+} (\om_1,\om_2,\om_3 ) & = \max \{ \om_1,\om_2,\om_3\},\qquad \om_{-} (\om_1,\om_2,\om_3 )  = \min \{ \om_1,\om_2,\om_3\},\\
\om_{0} (\om_1,\om_2,\om_3 ) & = \{ \om_1,\om_2,\om_3\}-\{ \om_{+},\om_{-}\}.
\end{align*}
Moreover, if $\varphi$ is convex we have that $\mathcal{G}_{\varphi}\geq 0$. 

Let $g\in \mathcal{M}_\rho$ be a weak solution to \eqref{eq:WKE} and $\varphi\in C(\R_{+})$ be a convex function. Then 
\begin{equation}\label{eq:convexity}
 \frac{d}{dt} \left( \int_0^{\infty} \varphi(\om) g(t,d\om) \right) \geq 0 \qquad \mbox{for a.e.}\ t\geq 0.
\end{equation}
\end{lemma}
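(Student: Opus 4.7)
The plan is to handle the three assertions in turn: first the non-negativity of $\mathcal{G}_\varphi$ for convex $\varphi$, then the symmetrization identity \eqref{eq:mono}, and finally the monotonicity along the flow.

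\emph{Sign of $\mathcal{G}_\varphi$.} It suffices to check $\mathcal{H}^1_\varphi\geq 0$ everywhere and $\mathcal{H}^2_\varphi\geq 0$ on the set $\{\omega_0 + \omega_- - \omega_+ \geq 0\}$, since the prefactor $\sqrt{(\omega_0+\omega_- - \omega_+)_+}$ in \eqref{def:G} kills the complement. For $\mathcal{H}^1_\varphi$, the two arguments $\omega_+ + \omega_0 - \omega_-$ and $\omega_- + \omega_+ - \omega_0$ average to exactly $\omega_+$, so Jensen's inequality gives the bound immediately. For $\mathcal{H}^2_\varphi$, in the relevant region the four values $\omega_+,\,\omega_- + \omega_0 - \omega_+,\,\omega_0,\,\omega_-$ are non-negative and the pairs $\{\omega_+,\,\omega_- + \omega_0 - \omega_+\}$ and $\{\omega_0,\omega_-\}$ both sum to $\omega_- + \omega_0$; since $\omega_+\geq \omega_0$ the first pair majorizes the second, so convexity yields $\varphi(\omega_+)+\varphi(\omega_- + \omega_0 - \omega_+)\geq \varphi(\omega_0)+\varphi(\omega_-)$.

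\emph{The identity \eqref{eq:mono}.} This is the technical core. Expanding the bracket in the LHS produces four cubic-in-$g$ terms, each involving exactly three of $\{g_1,g_2,g_3,g_4\}$, weighted by $\Phi\,\varphi_1$ with alternating signs. I would view everything on the three-dimensional resonance manifold $\mathcal{R}=\{(\omega_1,\omega_2,\omega_3,\omega_4)\in\R_+^4:\omega_1+\omega_2=\omega_3+\omega_4\}$: any choice of three of the four variables parametrizes $\mathcal{R}$ with unit Jacobian, and $\Phi$ is permutation-invariant. After an appropriate change of variables, each of the four terms becomes an integral of the common shape $\int \Phi\, g_1g_2g_3/\sqrt{\omega_1\omega_2\omega_3}$ times $\varphi$ evaluated at one of $\omega_1,\omega_2,\omega_3$ or at the ``missing'' variable $\omega_1+\omega_2-\omega_3$. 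Symmetrizing the resulting four test-function combinations over permutations of $(\omega_1,\omega_2,\omega_3)$ produces the factor $1/3$. Finally, writing $\Phi=\sqrt{\min\{\omega_1,\omega_2,\omega_3,\omega_1+\omega_2-\omega_3\}}$ and splitting according to whether the ``missing'' variable is smaller than $\omega_-$ (giving the $\sqrt{(\omega_0+\omega_- - \omega_+)_+}\,\mathcal{H}^2_\varphi$ piece) or not (giving the $\sqrt{\omega_-}\,\mathcal{H}^1_\varphi$ piece) assembles the definition of $\mathcal{G}_\varphi$.

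\emph{Monotonicity.} With the first two steps in hand, I would test \eqref{eq:WKE} against a time-independent convex $\varphi$. The kernel $\Phi\, g_1g_2g_3/\sqrt{\omega_1\omega_2\omega_3}\cdot[\varphi_4+\varphi_3-\varphi_1-\varphi_2]$ is, by symmetrizing in $\omega_1\leftrightarrow\omega_2$, $\omega_3\leftrightarrow\omega_4$, and the pair-swap $(\omega_1,\omega_2)\leftrightarrow(\omega_3,\omega_4)$, proportional to $\int K\,\varphi_1$ with $K$ equal to the bracket on the LHS of \eqref{eq:mono}. Invoking the identity then the sign of $\mathcal{G}_\varphi$ gives \eqref{eq:convexity}. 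A standard truncation argument, using $g\in\mathcal{M}_\rho$ to justify integrability at infinity, extends the result from $C_b^2$ to convex $\varphi\in C(\R_+)$.

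\emph{Main obstacle.} The real work is the second step. The difficulty is not conceptual but combinatorial: one must track, for each of the four sub-integrals, which of the four $\omega$'s plays the role of the ``missing'' variable after the change of variables, and how its position relative to $\omega_-$ dictates the split into the $\mathcal{H}^1$- and $\mathcal{H}^2$-contributions. The bookkeeping becomes substantially cleaner if carried out intrinsically on $\mathcal{R}$ rather than in any single coordinate chart, which is the route I would take.
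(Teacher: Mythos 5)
The paper itself does not prove this lemma: it states it as a combination of Proposition~2.22 and Lemma~2.25 of Escobedo--Vel\'azquez, describes \eqref{eq:mono} as ``a direct computation using the symmetries of the equation,'' and refers the reader to that reference. So the comparison is between your reconstruction and the approach the paper endorses by citation.

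Your sign verification of $\mathcal{G}_\varphi$ is correct and clean. For $\mathcal{H}^1_\varphi$, the two arguments do average to $\om_+$ (their sum is $2\om_+$, and both are nonnegative since $\om_+\geq\om_-$ and $\om_+\geq\om_0$), so Jensen applies. For $\mathcal{H}^2_\varphi$, on $\{\om_0+\om_--\om_+\geq 0\}$ the pair $\{\om_+,\,\om_-+\om_0-\om_+\}$ and the pair $\{\om_0,\om_-\}$ share the sum $\om_0+\om_-$, and since $\om_+\geq\om_0\geq\om_-\geq\om_-+\om_0-\om_+$ the first pair majorizes the second; the two-point Karamata inequality then gives the claim. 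The monotonicity deduction and the remark about approximating convex $\varphi\in C(\R_+)$ by $C_b^2$ truncations are also fine (one small slip: the weak-form kernel integral is \emph{equal} to the left side of \eqref{eq:mono}, not merely proportional to it --- both express $\tfrac{d}{dt}\int\varphi\,g$).

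The genuine gap is exactly the one you flag yourself: the identity \eqref{eq:mono} is outlined but not verified. You correctly identify the right moves --- expand the four cubic terms, change variables on the resonance manifold $\{\om_1+\om_2=\om_3+\om_4\}$ so that each term carries $g_1g_2g_3/\sqrt{\om_1\om_2\om_3}$, exploit the permutation invariance of $\Phi$, symmetrize over the three ways of singling out one of $\om_1,\om_2,\om_3$ to get the $1/3$, and split according to the sign of $\om_0+\om_--\om_+$ (equivalently, whether the ``missing'' variable $\om_1+\om_2-\om_3$ dominates or is dominated by $\om_-$) to separate the $\sqrt{\om_-}\,\mathcal{H}^1_\varphi$ and $\sqrt{(\om_0+\om_--\om_+)_+}\,\mathcal{H}^2_\varphi$ pieces. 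But the bookkeeping is never carried out, so in a self-contained write-up this step would need to be completed. Since the paper too leaves this to the reference, your treatment mirrors the paper's level of detail, but be aware that it is not an independent proof of \eqref{eq:mono}.
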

The nice \textit{monotonicity} formula \eqref{eq:mono} is a direct computation using the symmetries of the equation \eqref{eq:WKE}. The proof can be found in \cite[Proposition 2.22]{escobedo2015theory}.

The next result guarantees that if we consider initial data $g^{in}$ with discrete support, the dynamics will be discrete and nontrivial for later times. Before we state these results, we define some auxiliary sets to identify $\mathrm{supp}(g(t))$, with $g$ being the solution to \eqref{eq:WKE}. Let $A_1 = \mbox{supp}(g^{in})$, define $A_n$ inductively as:
\begin{equation}\label{eq:an}
A_{n+1} = (A_n +A_n -A_n)\cap (0,\infty).
\end{equation}
The idea behind these sets is the following: the wave interactions captured by the WKE \eqref{eq:WKE} are those between waves of different frequencies satisfying $\omega_4=\omega_1+\omega_2-\omega_3$. As a result, waves with frequencies in the set $A_n$ will produce waves with frequencies in the set $A_{n+1}$. In order to consider a set that includes all possible frequencies, we cannot stop this process at any finite $n$ and it is therefore natural to define:
\begin{equation}\label{eq:a*}
 A^{\ast} =\bigcup_{n=1}^{\infty} A_n .
 \end{equation}
Notice that, if we start with a finite number of Dirac masses, namely $g^{in}=M_1 \delta_1 + \sum_{j=2}^{N_0}m_j \delta_j$ for some finite $N_0\geq 2$, it is easy to show that
\begin{equation}
	A_1=\{1,2,\dots,N_0\}\quad \Longrightarrow 
 \quad A^{\ast}=\N.
\end{equation}
We are now ready to state the following
\begin{lemma}[Lemma 3.5 in \cite{escobedo2015theory}] \label{thm:support1}
Let $\rho<-1$, $g^{in}\in \mathcal{M}_{\rho}$ and let $g$ be the weak solution to the WKE \eqref{eq:WKE}. Suppose that $M=\int_{\R_{+}} g^{in}(d\om) >0$. Then for any $x\in A^{\ast}$, any $t>0$ and any $r>0$ we have that 
\[ \int_{B_r (x)} g(t,d\om) >0.\]
\end{lemma}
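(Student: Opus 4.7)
The result is Lemma 3.5 of \cite{escobedo2015theory} as quoted, so this sketch reproduces its argument. The strategy is induction on the level $n$ such that $x \in A_n$, combined with a Gronwall argument applied to the evolution of a localized observable.

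\textbf{Setup.} Fix $x \in A^*$, $r > 0$, and a nonnegative test function $\varphi \in C_c^\infty(\R_{+})$ with $\varphi(x) > 0$ and $\mathrm{supp}\,\varphi \subset B_r(x) \subset (0,\infty)$. Define $F_\varphi(t) = \int \varphi(\omega)\, g(t, d\omega)$. From the weak formulation \eqref{eq:WKE}, split the cubic integral into a nonnegative gain $\mathcal{G}_\varphi(t)$, collecting the contributions of $\varphi_3$ and $\varphi_4$, and a nonnegative loss $\mathcal{L}_\varphi(t)$, collecting those of $\varphi_1$ and $\varphi_2$, so that $F_\varphi'(t) = \mathcal{G}_\varphi(t) - \mathcal{L}_\varphi(t)$. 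Because $\mathrm{supp}\,\varphi$ stays away from $0$ and $g$ has finite conserved mass, one can bound $\mathcal{L}_\varphi(t) \leq C\, F_\varphi(t)$ for a constant $C = C(g^{in}, x, r)$, so Gronwall yields for every $0 \leq s_0 \leq t$
\[ F_\varphi(t) \geq e^{-C(t - s_0)} \Bigl( F_\varphi(s_0) + \int_{s_0}^{t} e^{-C(t-s)} \mathcal{G}_\varphi(s)\, ds \Bigr). \]

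\textbf{Induction.} The base case $x \in A_1 = \mathrm{supp}(g^{in})$ is immediate, since $F_\varphi(0) > 0$ by construction; taking $s_0 = 0$ above gives $F_\varphi(t) > 0$ for all $t \geq 0$. For the inductive step, assume the conclusion holds on $A_n$ and take $x \in A_{n+1}$, say $x = a + b - c$ with $a, b, c \in A_n \subset (0, \infty)$. Pick $\epsilon > 0$ small enough that $B_{3\epsilon}(x) \subset B_r(x)$ and that $\varphi$ admits a positive lower bound on $B_{3\epsilon}(x)$. Restricting the gain integral to $\omega_1 \in B_\epsilon(a)$, $\omega_2 \in B_\epsilon(b)$, $\omega_3 \in B_\epsilon(c)$ forces $\omega_4 = \omega_1 + \omega_2 - \omega_3 \in B_{3\epsilon}(x)$, and on this product box the weight $\Phi / \sqrt{\omega_1 \omega_2 \omega_3}$ is bounded below since $a, b, c > 0$. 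By Fubini and the inductive hypothesis applied separately at $a$, $b$, $c$ at time $s$, the three integrals $\int_{B_\epsilon(a)} g(s)$, $\int_{B_\epsilon(b)} g(s)$, $\int_{B_\epsilon(c)} g(s)$ are all strictly positive for every $s > 0$, hence $\mathcal{G}_\varphi(s) > 0$ for every $s > 0$. Plugging this into the Gronwall bound with any $0 < s_0 < t$ yields $F_\varphi(t) > 0$, proving the claim for $A_{n+1}$. Iterating gives the statement on $A^* = \bigcup_n A_n$.

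\textbf{Main obstacle.} The delicate point is the inductive step when $F_\varphi(0) = 0$: one cannot use initial mass at $x$, so strict positivity must come entirely from the gain $\mathcal{G}_\varphi(s)$ for $s > 0$. Rigorously lower-bounding $\mathcal{G}_\varphi$ requires the product-structure argument above together with the simultaneous application of the inductive hypothesis at all three points $a, b, c$. Once positivity is secured at any instant $s_1 > 0$, Gronwall propagates it forward; bounding the loss linearly in $F_\varphi$ is routine once $x > 0$, using that $\sqrt{\omega_i}$ is bounded below on $\mathrm{supp}\,\varphi$ and that mass is conserved in $\mathcal{M}_\rho$.
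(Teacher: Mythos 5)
The paper does not actually prove this statement: it is cited as Lemma~3.5 of \cite{escobedo2015theory}, and no argument for it appears in the present text, so there is no ``paper's own proof'' to compare against. Your reconstruction has the right high-level skeleton, and I believe it matches the structure of the Escobedo--Vel\'azquez argument: induct over the sets $A_n$, test against a nonnegative bump $\varphi$ around $x$, write $F_\varphi' = \mathcal{G}_\varphi - \mathcal{L}_\varphi$, lower-bound the gain by localizing $(\om_1,\om_2,\om_3)$ near $(a,b,c)$ so that $\om_4$ lands near $x=a+b-c$, and close with Gronwall. The base case, the gain positivity via Fubini and the inductive hypothesis at $a,b,c$, and the forward propagation of positivity by Gronwall are all correct and are, as you say, where the real content lies.

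The step you dismiss as ``routine,'' namely $\mathcal{L}_\varphi(t)\leq C\,F_\varphi(t)$, is in fact not justified as written and is the genuine gap. In the loss integrand $\Phi\,g_1g_2g_3/\sqrt{\om_1\om_2\om_3}\,\varphi_1$, the factor $1/\sqrt{\om_1}$ is indeed harmless since $\mathrm{supp}\,\varphi$ stays away from $0$, and the conserved mass controls one of the remaining marginals. But $\Phi=\min_j\sqrt{\om_j}$ can absorb only \emph{one} of the two singular factors $\om_2^{-1/2},\om_3^{-1/2}$: when both $\om_2,\om_3\to 0$ with, say, $\om_2\leq\om_3$, one has $\om_4\approx\om_1>0$ so $\Phi=\sqrt{\om_2}$, and $\Phi/\sqrt{\om_2\om_3}=1/\sqrt{\om_3}$ is unbounded. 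Closing the Gronwall estimate therefore requires a bound on $\int_0^1\om^{-1/2}g(t,d\om)$, which the $\mathcal{M}_\rho$ norm \eqref{eq:defMrho} does \emph{not} provide (it only controls $\int_0^1 g$). Escobedo and Vel\'azquez supply this missing ingredient through separate a priori estimates on weak solutions preventing mass from concentrating near $\om=0$; without such an estimate your $C$ could be infinite. Relatedly, splitting $[\varphi_4+\varphi_3-\varphi_1-\varphi_2]$ into gain and loss destroys the $O(|\om_i-\om_j|^2)$ cancellation visible in $\mathcal{G}_\varphi$ (see \eqref{def:G}--\eqref{def:H1}) that makes the \emph{combined} integrand absolutely convergent; so the two pieces $\mathcal{G}_\varphi,\mathcal{L}_\varphi$ may not individually be finite a priori, which is another point that must be addressed before the Gronwall step is legitimate. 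In the special discrete setting of \Cref{th:main} (support in $\N$, hence $\om\geq 1$) these issues vanish, but the lemma as stated is for general $g^{in}\in\mathcal{M}_\rho$.
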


The lemma above guarantees that $\N = A^{\ast} \subset \mbox{supp} (g(t))$ for all times $t>0$. In order to show the opposite inclusion, whence proving that $\mbox{supp} (g(t))=\N$, we need the following result.

\begin{lemma}[Lemma 3.8 in \cite{escobedo2015theory}] \label{thm:support2}
Let $\rho<-1$, $g^{in}\in \mathcal{M}_{\rho}$ and let $g$ be the weak solution to the WKE \eqref{eq:WKE}. Suppose that $M=\int_{\R_{+}} g^{in}(d\om) >0$ and that $\inf A^{\ast}>0$. Then $\mbox{supp}(g(t))\subset \overline{A^{\ast}}$ for any $t\geq 0$.
\end{lemma}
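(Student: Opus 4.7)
The plan is to test the weak formulation \eqref{eq:WKE} against nonnegative $\varphi \in C^2_c(\R_+)$ vanishing on $\overline{A^*}$ and to show, via a Gronwall-type argument, that $F_\varphi(t) := \int \varphi\, g(t,d\omega)$ stays $0$ for all $t$. Since a suitable family of such $\varphi$ exhausts any compact set of $\R_+ \setminus \overline{A^*}$, this yields $\mathrm{supp}(g(t)) \subset \overline{A^*}$ for every $t\geq 0$.

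The critical preparatory step is the observation that $\overline{A^*}$ is closed under the ternary operation $(a,b,c) \mapsto a+b-c$ whenever the result is positive. This is an induction on $n$ in \eqref{eq:an} (each $A_{n+1}$ contains $(A_n+A_n-A_n)\cap(0,\infty)$, so $A^*$ itself is stable under the operation) combined with continuity of $+$ and $-$ to pass to closures. Consequently, if $\omega_1,\omega_2,\omega_3 \in \overline{A^*}$ and $\omega_4 := \omega_1+\omega_2-\omega_3 \geq 0$, then either $\omega_4=0$ (in which case $\Phi \leq \sqrt{\omega_4}=0$ kills the integrand) or $\omega_4 \in \overline{A^*}$. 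Thus, for any $\varphi$ supported in $\R_+ \setminus \overline{A^*}$, the integrand of
\[
F_\varphi'(t) = \int_{\R_+^3} \Phi\,\frac{g_1 g_2 g_3}{\sqrt{\omega_1\omega_2\omega_3}}\,\bigl[\varphi_4 + \varphi_3 - \varphi_1 - \varphi_2\bigr]\,d\omega_1 d\omega_2 d\omega_3
\]
is supported on the set where at least one of $\omega_1,\omega_2,\omega_3$ lies outside $\overline{A^*}$. Moreover, $F_\varphi(0)=0$ since $\mathrm{supp}(g^{in})=A_1 \subset A^*$.

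Next, I would split this integral into the four contributions from $\varphi_1,\varphi_2,\varphi_3,\varphi_4$, and bound each one by an integral of $g(t,\cdot)$ over $\R_+ \setminus \overline{A^*}$. For the local pieces $\varphi_i$ with $i=1,2,3$, the bound $\Phi \leq \sqrt{\omega_i}$ cancels one denominator factor; combined with mass conservation and the lower bound $\omega \geq \inf A^* > 0$ on $\mathrm{supp}(g(t))$, these contributions are controlled by $\|\varphi\|_\infty$ times the bad mass $Y(t):=g(t,\R_+ \setminus \overline{A^*})$. Taking a monotone approximating sequence $\varphi_n \nearrow \mathbf{1}_{\R_+ \setminus \overline{A^*}}$ (truncated to a large compact interval $[\epsilon, R]$) and summing the resulting estimates yields an integral inequality $Y(t) \leq C\int_0^t Y(s)\,ds$; together with $Y(0)=0$, this forces $Y\equiv 0$.

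The main obstacle is the nonlocal creation term involving $\varphi_4 = \varphi(\omega_1+\omega_2-\omega_3)$: to bound it by the \emph{local} bad mass one must perform a change of variables, e.g.\ fix $\omega_1,\omega_2$ and set $u = \omega_1+\omega_2-\omega_3$, and then exploit the closure of $\overline{A^*}$ proved in the preparatory step to ensure that $u \in \mathrm{supp}(\varphi)$ forces $\omega_3 = \omega_1+\omega_2-u \notin \overline{A^*}$ whenever $\omega_1,\omega_2 \in \overline{A^*}$; thus at least one factor $g_i$ in the triple product is evaluated on the bad set and produces a factor of $Y(t)$. A parallel technical point is maintaining the lower bound $\mathrm{supp}(g(t)) \subset [\inf A^*, \infty)$, without which the factors $1/\sqrt{\omega_i}$ become unintegrable; this can be handled inside the same Gronwall bootstrap, since any mass below $\inf A^*$ would itself lie outside $\overline{A^*}$ and be counted in $Y(t)$.
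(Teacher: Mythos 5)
The paper does not prove this lemma; it is cited verbatim as Lemma~3.8 of \cite{escobedo2015theory}, so there is no in-paper proof to compare against. I therefore assess your argument on its own merits.

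The preparatory observation is correct and is surely the heart of any proof: since the $A_n$ are nested increasing (as $a=a+a-a$) and $A^{\ast}$ is a union, $A^{\ast}$ is stable under $(a,b,c)\mapsto a+b-c$ on $(0,\infty)$, and continuity passes this to $\overline{A^{\ast}}$. Consequently, for $\varphi\geq 0$ supported in $\R_{+}\setminus\overline{A^{\ast}}$, the collision integrand with $g$ supported in $\overline{A^{\ast}}$ vanishes identically (the $\varphi_4$ term either sees $\om_4\in\overline{A^{\ast}}$ or $\om_4=0$, where $\Phi=0$). This correctly identifies the mechanism. The issue is the quantitative Gronwall step.

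The gap is in bounding the right-hand side by $C\,Y(t)$ with a finite, time-uniform $C$. After using $\Phi\leq\sqrt{\om_i}$ to cancel the bad factor $g_i/\sqrt{\om_i}$, the $\varphi_3$ (and similarly $\varphi_4$) contribution is controlled by
\[
Y(t)\,\bigg(\int_{\R_{+}}\frac{g(t,d\om)}{\sqrt{\om}}\bigg)^{2},
\]
and this weighted moment is \emph{not} controlled in $\mathcal{M}_\rho$: the norm \eqref{eq:defMrho} bounds $\int_0^1 g$, not $\int_0^1 g/\sqrt{\om}$. Your proposed remedy --- that any mass below $\inf A^{\ast}$ is itself part of $Y(t)$ and can be absorbed into the bootstrap --- is circular as stated: to even set up the Gronwall inequality with a fixed constant you already need the lower bound $\mathrm{supp}\,g(t)\subseteq[\inf A^{\ast},\infty)$, which is precisely (a part of) the conclusion. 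Note that small bad mass $Y(t)$ located near $\om=0$ can make the $1/\sqrt{\om}$-weighted integral arbitrarily large, so the constant in ``$Y(t)\leq C\int_0^t Y$'' cannot be chosen uniformly without an independent a~priori estimate on a negative moment of $g(t)$ near the origin (or a separate argument showing no mass appears in $[0,\inf A^{\ast})$). This is where the hypothesis $\inf A^{\ast}>0$ must be used in an essential, nontrivial way, and your sketch does not supply that step. A secondary, smaller point: the weak formulation is stated for $\varphi\in C^2_c$, so the monotone approximation $\varphi_n\nearrow\mathbbm{1}_{\R_{+}\setminus\overline{A^{\ast}}}$ needs a justification (e.g.\ dominated convergence), which again requires the same finiteness of the weighted moment that is the crux of the difficulty.
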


The dynamics of our problem are therefore discrete. Thanks to Lemmas \ref{thm:support1} and \ref{thm:support2}, we know that the functions
\begin{equation}
	F_n (t)= \int_{\R_{+}}\varphi_n(\omega)g(t,d\om)= \int_{\{n\}} g(t,d\om) \qquad n\geq 1,
\end{equation} 
as introduced in \eqref{def:Fn}, fully capture the dynamics of the problem. Thanks to the conservation of mass, we may assume that our solutions have unit mass. Then the conserved energy yields:
\begin{equation}
	\label{eq:consmass}
	\sum_{n=1}^{+\infty}F_n(t)=M_1+M_2=1, \qquad E=\sum_{n=1}^{\infty}nF_n(t)=M_1+E_2,
\end{equation}
where $M_2,E_2$ are as defined in Theorem \ref{th:main}.
In order to prove lower bounds on $F_2$, it is convenient to define
\begin{equation}
	\label{def:Hn}
	H_n(t):=(F_n(t))^{-1}, \qquad t>0.
\end{equation}
We know these functions are well defined, since Lemma \ref{thm:support1} guarantees that $F_n(t)>0$ for all $t>0$ and $n\in\N$. Next we state the (infinite) system of ODE's for these quantities.

\begin{lemma}\label{lem:tedious_equations}
	Let $F_n, H_n$ be respectively given in \eqref{def:Fn}, \eqref{def:Hn}. Then
	\begin{align}
		\label{eq:dtFn}
		\frac{d}{dt}F_n&=F_n (Q_n-U_n)-F_n^2L_n+C_n,\\
		\label{eq:dtHn}\frac{d}{dt}H_n&=H_n (U_n-Q_n)+L_n-H_n^2C_n
	\end{align}
where the terms $L_n,Q_n,U_n,C_n$ are defined as follows: 
\begin{equation}
	\label{def:Ln}
	L_n=\frac{2}{n}F_1+\frac{2}{n}\left(\sum_{k=2}^{n-1}F_k+\sum_{k=n+1}^{2n-1}\frac{F_k\sqrt{2n-k}}{\sqrt{k}}\right);
\end{equation}
\begin{align}
\label{def:Qn1}	Q_n=&\ \frac{4}{\sqrt{n}}\frac{F_1F_{2n-1}}{\sqrt{2n-1}}+\sum_{k=n+1}^{\infty}\frac{F_k^2}{k}+\frac{1}{\sqrt{n}}\sum_{k=\lceil \frac{n}{2}\rceil }^{n-1}\frac{F_k^2}{k}\sqrt{2k-n}\\
\notag &+\frac{2}{\sqrt{n}}\left(2\sum_{m=2}^{n-1}\frac{F_mF_{2n-m}}{\sqrt{2n-m}}+\sum_{k=\lceil \frac{n+1}{2}\rceil}^{n-1}\sum_{m=n+1-k}^{k-1}\frac{F_kF_m}{\sqrt{km}}\sqrt{k+m-n}\right);
\end{align}
\begin{align}
\label{def:Qn2}
		U_n=&\ \frac{4}{\sqrt{n}} F_1\sum_{k=2}^{n-1}\frac{F_k}{\sqrt{k}}+\frac{2}{\sqrt{n}}\left(\sum_{k=2}^{n-1}\sum_{m=n+1}^{n+k}\frac{F_kF_m}{\sqrt{km}}\sqrt{n+k-m}+2\sum_{k=2}^{n-1}\sum_{m=2}^{k-1}\frac{F_kF_m}{\sqrt{k}}\right)\\
		\notag &\ +\frac{2}{\sqrt{n}}\sum_{k=n+1}^{\infty}\sum_{m=k+1}^{n+k-1}\frac{F_kF_m}{\sqrt{km}}\sqrt{n+k-m};
\end{align}
\begin{equation}
	\label{def:Cn}
\begin{split}
	C_n=&\ 2F_1\sum_{k=n+1}^{\infty}\frac{F_kF_{k+m-n}}{\sqrt{k(k+m-n)}}+\sum_{k=\lceil \frac{n+1}{2}\rceil}^{n-1}\frac{F_k^2F_{2k-n}}{k}\\
 &\ +2\left(\sum_{k=2}^{n-1}\sum_{m=2}^{k-1}\frac{F_kF_mF_{k+m-n}}{\sqrt{kn}}+\sum_{k=n+1}^{\infty}\sum_{m=2}^{n-1}\frac{F_kF_mF_{k+m-n}}{\sqrt{k(k+m-n)}}\right)\\
&\  +\sqrt{n}\left(2\sum_{k=n+2}^{\infty}\sum_{m=n+1}^{k-1}\frac{F_kF_mF_{k+m-n}}{\sqrt{km(k+m-n)}}+\sum_{k=n+1}^{\infty}\frac{F_k^2F_{2k-n}}{k\sqrt{2k-n}}\right).	
\end{split}
\end{equation}
\end{lemma}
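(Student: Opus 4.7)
The starting point is the weak formulation \eqref{eq:WKE} tested against $\varphi=\varphi_n$, which is time-independent so that $\de_t\varphi\equiv 0$ and the first term on the right-hand side of \eqref{eq:WKE} vanishes. By \Cref{thm:support1} and \Cref{thm:support2}, $\mathrm{supp}(g(t,\cdot))\subseteq \N$, so the triple integral over $\R_{+}^3$ collapses into a triple sum over $(k_1,k_2,k_3)\in\N^3$ subject to $k_4:=k_1+k_2-k_3\ge 1$. Assuming the normalisation $\varphi_n(j)=\delta_{jn}$ (so that $F_n(t)=g(t,\{n\})$), we arrive at
\begin{equation*}
\frac{d}{dt}F_n = \sum_{\substack{k_1,k_2,k_3\in\N \\ k_4\ge 1}} \Phi(k_1,k_2,k_3,k_4)\, \frac{F_{k_1}F_{k_2}F_{k_3}}{\sqrt{k_1k_2k_3}}\,\bigl[\delta_{k_4,n}+\delta_{k_3,n}-\delta_{k_2,n}-\delta_{k_1,n}\bigr],
\end{equation*}
which reduces the lemma to a purely algebraic identity.

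The core of the argument is to split this sum according to how many of $\{k_1,k_2,k_3\}$ (and separately $k_4$) coincide with $n$; this automatically separates contributions by their degree in $F_n$. When $k_1=k_2=k_3=n$, $k_4=n$ is forced and the bracket vanishes, so no cubic contribution arises. When exactly two of $k_1,k_2,k_3$ equal $n$, only the pairing $k_1=k_2=n$ (with $k_3\neq n$) gives a nonzero bracket (equal to $-2$, because $k_4=2n-k_3\neq n$); computing $\Phi=\sqrt{\min\{n,k_3,2n-k_3\}}$ and splitting into $k_3<n$ and $k_3>n$ reproduces $-F_n^2\,L_n$ with $L_n$ as in \eqref{def:Ln}. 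When exactly one of $k_1,k_2,k_3$ equals $n$, the bracket depends on whether $k_4$ coincides with $n$: the case $k_3=n$ yields $+1$ generically and $+2$ when $k_1+k_2=2n$ (the doubling is what produces the factor $4$ in the first summand of $Q_n$), while the $k_1=n$ (resp.\ $k_2=n$) case yields $-1$ or $0$. Using the $k_1\leftrightarrow k_2$ symmetry of the summand to merge the two loss configurations, and identifying the minimum of $(k_1,k_2,k_3,k_4)$ in each sub-case to evaluate $\Phi$, the sums rearrange exactly into $F_n(Q_n-U_n)$ with $Q_n,U_n$ as in \eqref{def:Qn1}--\eqref{def:Qn2}. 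Finally, when none of $k_1,k_2,k_3$ equals $n$, only $\delta_{k_4,n}=1$ survives, and summing over $k_1,k_2$ with $k_3=k_1+k_2-n$ yields the pure-production term $C_n$ in \eqref{def:Cn}.

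The derivation of \eqref{eq:dtHn} is then immediate from the chain rule $\tfrac{d}{dt}H_n=-H_n^2\,\tfrac{d}{dt}F_n$ combined with \eqref{eq:dtFn}; the well-posedness of $H_n$ for $t>0$ is guaranteed by $F_n(t)>0$, which holds by \Cref{thm:support1}.

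The principal obstacle is the combinatorial bookkeeping. One has to enumerate all admissible triples $(k_1,k_2,k_3)\in\N^3$, classify them according to which indices coincide with $n$ and which of $\{k_1,k_2,k_3,k_4\}$ realises the minimum (determining $\Phi$), exploit the $k_1\leftrightarrow k_2$ symmetry without double counting, and then reindex the resulting sums into the canonical form of $L_n,Q_n,U_n,C_n$. The various non-trivial summation ranges $k\in[\lceil n/2\rceil,n-1]$, $k\in[\lceil(n+1)/2\rceil,n-1]$, etc.\ appearing in \eqref{def:Qn1}--\eqref{def:Cn} reflect precisely the constraints ($2k-n\ge 0$, $k+m-n\ge 0$, \ldots) needed to force one particular index to be the smallest. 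Each step is elementary but the full classification is lengthy, which is why the detailed computation is deferred to \Cref{sec:appendix}.
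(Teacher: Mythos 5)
Your proposal is correct and follows essentially the same route as the paper's appendix: reduce the weak formulation tested against $\varphi_n$ to a triple sum over integer frequencies, classify admissible triples by which of $k_1,k_2,k_3$ (and then $k_4$) coincide with $n$, evaluate $\Phi$ by identifying the minimum frequency in each regime, and exploit the $k_1\leftrightarrow k_2$ symmetry to collect terms of like degree in $F_n$. The only detail you glide over is that the "$k_1=n$" loss terms and the "$k_3=n$" gain terms do not each individually reorganize into $-U_n$ and $Q_n$ — two pairs of them cancel against one another before the remainder takes the stated form — but this is a bookkeeping point and your overall plan, including the derivation of \eqref{eq:dtHn} from \eqref{eq:dtFn} via $H_n'=-H_n^2 F_n'$, is the paper's argument.
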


The proof of \Cref{lem:tedious_equations} is a long albeit elementary computation, and so we postpone it to \Cref{sec:appendix}.
\begin{remark}
    In the definitions of $L_n,Q_n,U_n,C_n$, we have isolated the terms containing $F_1$ that appears as first terms on the right-hand side. For the long-term behavior, one should have in mind that $F_1=1$ up to small errors. Therefore when deriving a toy model or when doing a perturbative argument, it is natural to replace $F_1$ by $1$ and consider all the other terms as lower order terms to be neglected (more precisely, one would hope to bootstrap a suitable smallness condition).
\end{remark}

\section{Bounds on the rates of convergence}\label{sec:main}
In this section, we prove Theorem \ref{th:main} and Proposition \ref{prop:cond}. 
We  start by studying the function $F_1$. In particular, we want to be as quantitative as possible since, in view of the conservation of the mass \eqref{eq:consmass}, bounds on $F_1$ will yield a priori bounds for the rest of the $F_n$, $n\geq 2$.

	\begin{proposition}
		\label{prop:F1}
		Let $M_1,M_2\geq 0$ be such that $M_1+M_2=1$. Then for $t>t_0$ we have 
		\begin{align}\label{eq:decayF1prop}
			&1-F_1(t)\leq \frac{c_1(t_0)}{\sqrt{b_1(t_0)\,(t-t_0)+3E}},\\
		\label{eq:c1}
			&c_1(t_0)=\sqrt{3E}(1-F_1(t_0)), \qquad b_1(t_0)=2F_1(t_0)(1-F_1(t_0))^2.
		\end{align}
	\end{proposition}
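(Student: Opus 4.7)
\medskip
\noindent\textbf{Proof plan.} The strategy is to derive a closed differential inequality of the form $\frac{dF_1}{dt}\gtrsim (1-F_1)^{3}/E$ for $t\geq t_0$ and then integrate it.

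First, I would specialise the weak formulation \eqref{eq:WKE} to the test function $\varphi_1$ supported near $\omega=1$ and use the discrete support of $g$ (guaranteed by \Cref{thm:support1} and \Cref{thm:support2}). Evaluating the bracket $[\varphi_1(n_4)+\varphi_1(n_3)-\varphi_1(n_2)-\varphi_1(n_1)]$ with $\varphi_1(k)=\mathbf{1}_{k=1}$ and cancelling the gain indicators ($\mathbf{1}_{n_3=1}$, $\mathbf{1}_{n_4=1}$) against the loss indicators ($\mathbf{1}_{n_1=1}$, $\mathbf{1}_{n_2=1}$) produces
\[
\frac{dF_1}{dt} \;=\; F_1\sum_{n\geq 2}\frac{F_n^{2}}{n} + \sum_{n_1,n_2\geq 2}\frac{F_{n_1}F_{n_2}F_{n_1+n_2-1}}{\sqrt{n_1 n_2(n_1+n_2-1)}} \;\geq\; 0,
\]
so $F_1$ is non-decreasing; in particular $F_1(t)\geq F_1(t_0)$ for all $t\geq t_0$.

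The heart of the proof is the pointwise inequality $\frac{dF_1}{dt}(t) \;\geq\; \tfrac{F_1(t)}{3E}(1-F_1(t))^{3}$. To establish it I would combine two ingredients. First, Jensen's inequality applied to the convex function $x\mapsto 1/\sqrt{x}$ and the probability measure $F_n/(1-F_1)$ on $\{n\geq 2\}$, together with $\sum_{n\geq 2}nF_n\leq E$, gives
\[
\sum_{n\geq 2}\frac{F_n}{\sqrt{n}} \;\geq\; \frac{(1-F_1)^{3/2}}{\sqrt{E-F_1}} \;\geq\; \frac{(1-F_1)^{3/2}}{\sqrt{E}},
\]
so that $\bigl(\sum_{n\geq 2} F_n/\sqrt{n}\bigr)^{2}\geq (1-F_1)^{3}/E$. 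Second, one extracts from $\frac{dF_1}{dt}$ a contribution bounded below by $\tfrac{F_1}{3}\bigl(\sum_{n\geq 2} F_n/\sqrt{n}\bigr)^{2}$; the prefactor $1/3$ is expected to come from the same $1/3$ appearing in $\mathcal G_\varphi$ of \eqref{def:G}, via the monotonicity identity \eqref{eq:mono} applied to the convex test function $\varphi(\omega)=1/\sqrt\omega$ (for which $\int \varphi g = \sum_n F_n/\sqrt{n}$ is itself non-decreasing).

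Granted the differential inequality, using $F_1(t)\geq F_1(t_0)$ closes it to the autonomous ODE $\frac{dF_1}{dt}\geq \tfrac{F_1(t_0)}{3E}(1-F_1)^{3}$ on $[t_0,\infty)$. Setting $X(t):=1-F_1(t)$ and noting that $\frac{d}{dt}X^{-2} = 2X^{-3}\frac{dF_1}{dt} \geq \tfrac{2F_1(t_0)}{3E}$, integration from $t_0$ to $t$ yields
\[
\frac{1}{X(t)^{2}} \;\geq\; \frac{1}{(1-F_1(t_0))^{2}} + \frac{2F_1(t_0)(t-t_0)}{3E},
\]
which after rearrangement is exactly \eqref{eq:decayF1prop} with $c_1(t_0),b_1(t_0)$ as in \eqref{eq:c1}. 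The main obstacle is the sharp lower bound $\frac{dF_1}{dt} \geq F_1(1-F_1)^{3}/(3E)$: the trivial estimate $\frac{dF_1}{dt} \geq F_1\sum_{n\geq 2} F_n^{2}/n$ alone only yields a $t^{-1/3}$ decay, so it is essential to use both terms in the cubic identity above, or equivalently to invoke \eqref{eq:mono} with $\varphi=1/\sqrt{\omega}$, in order to match the correct power $(1-F_1)^{3}$ and the precise constant $1/(3E)$.
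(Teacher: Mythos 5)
Your proof plan has the right skeleton---derive a differential inequality $\frac{dF_1}{dt}\geq\frac{F_1(t_0)}{3E}(1-F_1)^3$ and integrate it---and the peripheral ingredients match the paper's. Your Jensen inequality is exactly the paper's H\"older interpolation $\sum_{n\geq 2}F_n\leq\bigl(\sum_{n\geq 2}\tfrac{F_n}{\sqrt n}\bigr)^{2/3}\bigl(\sum_{n\geq 2}nF_n\bigr)^{1/3}\leq\bigl(\sum_{n\geq 2}\tfrac{F_n}{\sqrt n}\bigr)^{2/3}E^{1/3}$, your exact identity for $dF_1/dt$ is correct, and the integration via $X=1-F_1$ is the same and gives the right $c_1(t_0),\,b_1(t_0)$.

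The genuine gap is the ``second ingredient.'' Your own identity reads $\frac{dF_1}{dt}=F_1\sum_{n\geq 2}\frac{F_n^2}{n}+\sum_{n_1,n_2\geq 2}\frac{F_{n_1}F_{n_2}F_{n_1+n_2-1}}{\sqrt{n_1n_2(n_1+n_2-1)}}$, and this does \emph{not} in any visible way produce $\frac{dF_1}{dt}\geq\frac{F_1}{3}\bigl(\sum_{n\geq 2}\tfrac{F_n}{\sqrt n}\bigr)^{2}$: the first term is $F_1$ times only the diagonal $\sum_n F_n^2/n$, while the square contains the off-diagonal mass $\sum_{n\neq m}F_nF_m/\sqrt{nm}$, and the second term carries an uncontrolled extra factor $F_{n_1+n_2-1}$ which need not dominate it. Worse, the mechanism you invoke to close the gap---\eqref{eq:mono} with $\varphi(\omega)=1/\sqrt\omega$---is the wrong one: that choice of $\varphi$ yields a monotonicity statement for the quantity $\sum_n F_n/\sqrt n$ itself, not a lower bound on $dF_1/dt$; and $1/\sqrt\omega$ is not even an admissible test function on $\R_+$. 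What the paper does instead is apply \eqref{eq:mono} with the convex tent function $\varphi_1(\omega)=(3-2\omega)_+$, for which $\int\varphi_1\,g=F_1$ on the discrete support, then analyses $\mathcal{G}_{\varphi_1}$ directly: it shows $\mathcal{H}^2_{\varphi_1}\geq 0$ by a case check, drops it, and reduces the $\mathcal{H}^1_{\varphi_1}$ contribution to the slice $\{\omega_-=1,\ \omega_0=\omega_+>1\}$ before invoking the interpolation. The $1/3$ does come from \eqref{def:G} as you suspected, but through the tent function $\varphi_1$ and the subsequent case analysis, not through $1/\sqrt\omega$; that reduction is the step your argument is missing.
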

When $t_0=0$, notice that $c_1(0)=:c_1=\sqrt{3E}(1-M_1)$ and $b_1(0)=:b_1=2M_1 (1-M_1)^2$.
Using \eqref{eq:decayF1prop}, we readily obtain the following.
\begin{corollary}
\label{cor:lowF} For $t>t_0$ we have that 
\begin{equation}
1-F_1(t)\geq  F_2 (t) \geq \frac{c_2}{(t-t_0+3E/b_1(t_0))^{\alpha}+C_2},
\end{equation}
where $c_2, C_2$ can be explicitly computed and
\begin{equation}
\label{def:alpha}
\alpha=\max\left\{1, \frac{3E}{16F_1(t_0)}\right\}. 
\end{equation}
\end{corollary}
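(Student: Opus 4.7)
The inequality $1-F_1(t) \geq F_2(t)$ is immediate from the mass conservation identity $\sum_{n\geq 1} F_n(t) \equiv 1$ in \eqref{eq:consmass} combined with the non-negativity of every $F_n$. The substantive content is the lower bound on $F_2$, which I would prove by passing to the reciprocal $H_2 := 1/F_2$, starting from \eqref{eq:dtHn} at $n=2$,
\[
\frac{d}{dt} H_2 \;=\; H_2(U_2-Q_2) + L_2 - H_2^2 C_2,
\]
and dropping the two non-positive terms $-H_2 Q_2$ and $-H_2^2 C_2$ to obtain the linear differential inequality $\tfrac{d}{dt}H_2 \leq H_2\, U_2 + L_2$.

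The next task is to bound $L_2$ and $U_2$ explicitly. From \eqref{def:Ln} one computes $L_2 = F_1 + F_3/\sqrt{3}$, which is majorized by a universal constant $L$. From \eqref{def:Qn2} only the last double sum survives at $n=2$, leaving $U_2 = \sqrt{2}\sum_{k\geq 3} F_k F_{k+1}/\sqrt{k(k+1)}$. Combining $\sqrt{k(k+1)} \geq \sqrt{12}$ for $k\geq 3$ with the mass bound $\sum_{k\geq 3} F_k \leq 1 - F_1$ and the pointwise bound $F_k \leq 1 - F_1$, I obtain an estimate of the form $U_2(t) \leq C_\ast (1-F_1(t))^2$ with an explicit constant $C_\ast$. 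Plugging in the upper bound \eqref{eq:decayF1prop} from Proposition \ref{prop:F1} and using the arithmetic identity $c_1(t_0)^2/b_1(t_0) = 3E/(2F_1(t_0))$ transforms the differential inequality into
\[
\frac{d}{dt} H_2(t) \;\leq\; \frac{\gamma}{\,t-t_0+\tau_0\,}\, H_2(t) + L, \qquad \tau_0 := \frac{3E}{b_1(t_0)}, \quad \gamma := \frac{3\, C_\ast\, E}{2\, F_1(t_0)}.
\]

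The final step is a variable-coefficient Gronwall argument: the substitution $H_2 = (t-t_0+\tau_0)^\gamma V$ reduces the inequality to $\tfrac{d}{dt} V \leq L\,(t-t_0+\tau_0)^{-\gamma}$, which integrates explicitly. If $\gamma < 1$, the forcing term dominates and yields $H_2(t) \leq c_2^{-1}(t-t_0+\tau_0) + C_2$, i.e.\ decay of $F_2$ with rate $\alpha = 1$; if $\gamma \geq 1$, the homogeneous factor $(t-t_0+\tau_0)^\gamma$ dominates and yields $H_2(t) \leq c_2^{-1}(t-t_0+\tau_0)^\gamma + C_2$, i.e.\ rate $\alpha = \gamma$. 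These two regimes are exactly packaged by $\alpha = \max\{1, \gamma\}$ as in \eqref{def:alpha}. Inverting the bound on $H_2$ then produces the claimed lower bound on $F_2(t)$.

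The main technical hurdle will be extracting the sharp value $C_\ast = 1/8$, which is required for $\gamma = 3C_\ast E/(2F_1(t_0))$ to match the formula $\alpha = 3E/(16 F_1(t_0))$ (and for the threshold $\gamma < 1$ to match the previously stated condition $M_1 \geq 3E_2/19$ up to constants). A direct combination of $\sqrt{k(k+1)} \geq \sqrt{12}$ with Cauchy--Schwarz only yields $C_\ast = 1/\sqrt{6}$, so the improvement will require a more delicate exploitation of the $F_k F_{k+1}$ pairing in $U_2$, together with the energy conservation $\sum k F_k = E$, or alternatively retaining a positive fraction of the $-H_2 Q_2$ term (whose leading contribution involves $4 F_1 F_3/\sqrt{6}$) in the ODE for $H_2$ before applying Gronwall.
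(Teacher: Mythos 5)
Your overall strategy — pass to $H_2 = 1/F_2$, obtain a linear differential inequality, and close with a variable-coefficient Gronwall argument — matches the paper's, and your identification of the arithmetic $c_1(t_0)^2/b_1(t_0) = 3E/(2F_1(t_0))$ and the final dichotomy $\alpha = \max\{1,\gamma\}$ is correct. But there is a genuine gap, which you yourself flag: after discarding $-H_2 Q_2$, no direct estimate of $U_2$ alone delivers $C_\ast = 1/8$, and the proposal ends without a working argument for the stated $\alpha$.

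The fix is precisely to \emph{not} drop $Q_2$, but the relevant piece of $Q_2$ is not the one you point to. You single out the term $\tfrac{4}{\sqrt{6}} F_1 F_3$, which is in fact the part the paper discards (it is nonnegative, so dropping it is harmless). The operative cancellation is against the \emph{other} piece, $\sum_{k\geq 3} F_k^2/k$. Concretely, the paper bounds $U_2 = \sqrt{2}\sum_{k\geq3} \tfrac{F_k F_{k+1}}{\sqrt{k(k+1)}}$ by the weighted Young/AM--GM inequality with weight $a=1/\sqrt 2$, which gives
\[
U_2 \;\leq\; \sum_{k\geq 3} \frac{F_k^2}{k} + \frac{1}{2}\sum_{k\geq 4}\frac{F_k^2}{k},
\]
and the first sum cancels exactly against $\sum_{k\geq 3} F_k^2/k$ inside $Q_2$. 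What remains is
\[
U_2 - Q_2 \;\leq\; \frac{1}{2}\sum_{k\geq 4}\frac{F_k^2}{k} \;\leq\; \frac{1}{8}\Big(\sum_{k\geq 4}F_k\Big)^2 \;\leq\; \frac{1}{8}\,(1-F_1)^2,
\]
which yields the sharp $C_\ast = 1/8$ and hence $\alpha = 3E/(16F_1(t_0))$. The crude bound $\sqrt{k(k+1)}\geq\sqrt{12}$ you tried is strictly weaker and cannot recover this, nor does energy conservation enter here. So the idea you flagged as the way forward (retaining part of $Q_2$) is right, but the part you retained and the algebraic mechanism were wrong; the key is the weighted AM--GM on $U_2$ designed precisely so its dominant output is annihilated by $Q_2$.
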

\begin{remark}
Given that $F_1(t)\rightarrow 1$ as $t\rightarrow \infty$ monotonically, one can always choose $t_0$ large enough so that 
\begin{equation}\label{eq:cond_t0}
	\frac{3E}{16F_1(t_0)}\leq 1\quad \Longrightarrow \quad F_1(t_0)\geq \frac{3E}{16}.
\end{equation}
This is telling us that a lower bound with a rate $\mathcal{O}(t^{-\alpha})$ with $\alpha>1$ cannot be sustained for all times.
Moreover, since $E=E_2-M_1$, if we start with $M_1\geq 3E_2/19$ then we can take $t_0=0$ and $\alpha=1$. 
\end{remark}

The proof of Theorem \ref{th:main} directly follows by combining Proposition \ref{prop:F1} with Corollary \ref{cor:lowF}. Therefore, we just prove the latter results. The start of the proof of Proposition \ref{prop:F1} is similar to Theorem 3.2 in \cite{escobedo2015theory}. The convergence result presented in Theorem 3.2 in \cite{escobedo2015theory} is correct. However, the rate of convergence one could derive from the last differential inequality in the proof of Theorem 3.2 in \cite{escobedo2015theory} (see the second equality at page 53 in \cite{escobedo2015theory}) does not hold due to an error in said inequality. We fix this error as part of the proof of Proposition \ref{prop:F1}. In fact, if the convergence rate that can be deduced from \cite{escobedo2015theory} were true, we would be able to prove that $1-F_1(t)=\mathcal{O}((t-t_0)^{-1})$ and therefore Proposition \ref{prop:cond} would not be a conditional result.
 
\begin{proof}[\textbf{Proof of Proposition \ref{prop:F1}}]
  To obtain the bound for $F_1$, namely \eqref{bd:F2}, we follow the argument in the proof of \cite[Theorem 3.2]{escobedo2015theory}. In particular, choose the test function:
\begin{equation}
	\label{def:phi1}
	\varphi_{1}(\om):=\left(3-2\om\right)_+.
\end{equation}
Notice that $\mathrm{supp}(\varphi_{1})\subset[0,3/2]$ which implies $F_1(t)=\int \varphi_{1}(\om) g(t)$. 

Given that $\varphi_1$ in \eqref{def:phi1} is convex and that $g$ solves \eqref{eq:WKE}, we apply \Cref{thm:monotonicity} to get
\begin{equation}
\label{bd:F1'}
    F_1'(t)  = \frac{d}{dt} \left( \int_{\{1\}} g(t) \right) \geq \int_{\R_{+}^3} \frac{g_1g_2g_3}{\sqrt{\om_1\om_2\om_3}} \mathcal{G}_{\varphi_1},
\end{equation} 
where we omit the explicit dependencies on $t,d\om_j$ to ease the notation.
Recalling the definitions of $\mathcal{G}_{\cdot}, \mathcal{H}_{\cdot}^{1}, \mathcal{H}_{\cdot}^2$ in \eqref{def:G}-\eqref{def:H2}, we claim that  
\begin{equation}
\label{bd:H2}
    \mathcal{H}_{\varphi_1}^2\geq0.
\end{equation}
Indeed, the coefficient of $\mathcal{H}_{\varphi_1}^2$ in the formula of $\mathcal{G}_{\varphi_1}$ is nonzero only if $\om_0+\om_{-}-\om_{+}>0$. Note also that $\mbox{supp}(\varphi_1) \cap \mbox{supp}(g(t)) = \{ 1\}$ so we only need to consider $\om_{+},\om_0,\om_{-}\in \N$, where the following happens:
\begin{itemize}
\item If $\om_{+}<1$ then $\om_0<1$ and $\om_{-}<1$ thus $\mathcal{H}_{\varphi_1}^2=0$.

\item If $\om_{+}=1$, we must have $\om_0=\om_{-}=1$ since $\om_0+\om_{-}-\om_{+}>0$. In this case $\mathcal{H}_{\varphi_1}^2=0$.

\item If $\om_{+}>1$, there are two options:
\begin{enumerate}
    \item Suppose at least one $\om_0$ or $\om_{-}$ is 1. Given that $\om_0+\om_{-}-\om_{+}>0$, the only option is $\om_{-}=1$ and $\om_0=\om_{+}$. In this case $\mathcal{H}_{\varphi_1}^2=0$.
    \item If $\om_0$ and $\om_{-}$ are not 1, then only $\varphi_1 (\om_0+\om_{-}-\om_{+})$ may be nonzero (if $\om_0+\om_{-}-\om_{+}=1$) and thus $\mathcal{H}_{\varphi_1}^2\geq 0$.
\end{enumerate}
\end{itemize}
Therefore, combining the inequality \eqref{bd:F1'} with \eqref{bd:H2} and the definition of $\mathcal{G}_{\cdot}$ \eqref{def:G}, we get
\begin{equation}
\label{bd:F1'H}
    F_1'(t)  \geq \frac{1}{3}\, \int_{\R_{+}^3} \frac{g_1g_2g_3}{\sqrt{\om_0\om_{+}}} \mathcal{H}_{\varphi_1}^{1}.
\end{equation} 
 We can also restrict integration to the set $\om_{+}>1$ or we would have $\mathcal{H}_{\varphi_1}^1=0$. From the definition of $\varphi_1$ we see that $\varphi_1 (\om_{+})=0$ as well as $\varphi_1 (\om_{+} + \om_0 - \om_{-})=0$. Consequently, 
\[  F_1'(t) \geq \frac{1}{3}\, \int_{\{\om_{+}>1, \, \om_j \in \N\}} \frac{g_1g_2g_3}{\sqrt{\om_0\om_{+}}} \, \varphi_1 (\om_{+} + \om_{-} - \om_0).\]
For $\varphi_1(\om_{+}+\om_{-}-\om_0)$ to be nonzero we must also have that $\om_0=\om_+>1$ and $\omega_{-}=1$. Indeed, if $\om_+-\om_0>0$, since all frequencies are concentrated in $\N$, we must have $\om_+-\om_0\geq 1$. Having also that $\omega_{-}\geq 1$, we conclude $\om_++\om_{-}-\om_0\geq 2$, which is outside the support of $\varphi_1$. Similarly, since $\omega_0\leq \omega_{+}$ if $\omega_{-}\geq 2$ we have $\omega_{+}+\omega_{-}-\omega_0\geq 2$.  Therefore
\begin{align}\label{eq:EV_error}
 F_1'(t) & 
 \geq \frac{1}{3}\, \int_{\{\om_{-}=1,\ \om_0 = \om_{+}>1\}} \frac{g_1g_2g_3}{\sqrt{\om_0\om_{+}}}\nonumber \\
 & \geq  \frac{1}{3}\, F_1(t)\, \left( \int_{ \N -\{1\} } \frac{1}{\sqrt{\om}}\, g(t ) \right)^2 \geq \frac{1}{3}\,F_1(t_0)\, \left( \int_{ \N -\{1\} } \frac{1}{\sqrt{\om}}\, g(t) \right)^2.
 \end{align}
In the last inequality we used the fact that $F_1(t)$ is monotone nondecreasing. We may choose $t_0=0$ if $F_1(0)=M_1\neq 0$, whereas if $M_1=0$ any $t_0>0$ would do in view of \Cref{thm:support1}.
 
At this stage, we note that the factor $\omega^{-1/2}$ on the right-hand side of \eqref{eq:EV_error} was missing in the proof of Theorem 3.2 in \cite{escobedo2015theory},  and it is not clear why it can be removed. In fact, without it one  can simply exploit the conservation of mass to conclude, see \cite{escobedo2015theory}. Here we have to be more careful. We exploit both the conservation of the mass and energy and use an interpolation inequality. Namely, by the H\"older inequality
	\begin{align}
		 \int_{ \N -\{1\} } g(t )&\leq \bigg( \int_{ \N -\{1\} } \frac{1}{\sqrt{\om}}\, g(t ) \bigg)^{\frac23}\bigg( \int_{ \N -\{1\} }\om\, g(t ) \bigg)^{\frac13}\leq \bigg( \int_{ \N -\{1\} } \frac{1}{\sqrt{\om}}\, g(t ) \bigg)^{\frac23}E^\frac13,	\end{align}
where we used the conservation of the energy in the last inequality. Combining the bound above with \eqref{eq:EV_error}, and using the conserved, normalized mass, we find that 
\begin{align*}
	F_1'(t) & \geq\frac13 \frac{F_1(t_0)}{E}\left( \int_{ \N -\{1\} } \, g(t ) \right)^{3}=\frac{F_1(t_0)}{3E}\left( 1-F_1(t)\right)^{3}.
\end{align*}
Solving this differential inequality we obtain that
\begin{align}
	1-F_1(t)\leq \frac{c_1(t_0)}{\sqrt{b_1(t_0)(t-t_0)+3E}},
\end{align}
where $c_1,b_1$ are defined in \eqref{eq:c1}.
\end{proof}

\begin{proof}[\textbf{Proof of Corollary \ref{cor:lowF}}]
To prove the lower bound on $F_2$, it is convenient to make use of $H_2=F_2^{-1}$, which is well defined thanks to Lemma \ref{thm:support1}. On account of \eqref{eq:dtHn} we have that 
\begin{equation}\label{eq:diff_H2}
 \frac{d}{dt}H_2 \leq H_2  (U_2 - Q_2)+L_2,
\end{equation}
where
\begin{align}
	\label{eq:U2Q2}
&U_2 =  \frac{2}{\sqrt{2}}\sum_{k=3}^{\infty}\frac{F_kF_{k+1}}{\sqrt{k(k+1)}}, \qquad Q_2  = \frac{4}{\sqrt{6}}\, F_1 F_{3}+\sum_{k=3}^{\infty}\frac{F_k^2}{k},\\
\label{eq:L2}&L_2  = F_1 +\frac{F_3}{\sqrt{3}}\leq 2.
\end{align}
Using the Cauchy-Schwarz inequality, we bound $U_2$ as 
\begin{equation}
	U_2\leq \sum_{k=3}^\infty \frac{F_k^2}{k}+\frac12\sum_{k=4}^\infty \frac{F_k^2}{k}.
\end{equation}
For $k\geq 2$ we know that 
\begin{equation}
	F_k\leq  \sum_{j=2}^\infty F_j=1-F_1.
\end{equation}
Since $F_k>0$ for all $k$ (see \Cref{thm:support1}), \eqref{eq:decayF1prop} yields
\begin{equation}
	U_2-Q_2\leq \frac12\sum_{k=4}^\infty \frac{F_k^2}{k}\leq \frac{1}{8}(1-F_1)^2\leq \frac{1}{8}\frac{c_1(t_0)^2}{b_1(t_0)(t-t_0)+3E}
\end{equation}
Without loss of generality, let us assume that $t_0=0$ and $M_1>0$ from now on. We denote $c_1 (t_0)=c_1$ and $b_1(t_0)=b_1$ for this choice. Then
\begin{equation}
	\int_{s}^{t} (U_2 (\tau) - Q_2(\tau))\, d\tau \leq \int_{s}^{t}  \frac{1}{8}\frac{c_1^2}{b_1 \tau+3E} \, d\tau \leq  \frac{c_1^2}{8b_1} \log \left( \frac{b_1 t+3E}{b_1 s + 3E} \right),
\end{equation}
We define $\alpha=c_1^2/(8b_1)$ as announced in \eqref{def:alpha}. Then
\begin{equation}
	\label{bd:exp}
	 \exp\left( \int_{s}^{t} (U_2 (\tau) - Q_2(\tau))\, d\tau\right) \leq 
	 \left( \frac{b_1 t+3E}{b_1 s + 3E} \right)^{\alpha}\, .
\end{equation}
Integrating \eqref{eq:diff_H2} yields
\[ H_2(t)\leq e^{\int_{0}^t (U_2 (s) - Q_2(s)) \, ds} H_2(0)+\int_{0}^t e^{\int_{s}^t(U_2 (\tau) - Q_2(\tau))\, d\tau}L_2(s)\, ds.\]
If $\alpha\neq 1$, combining \eqref{bd:exp} with \eqref{eq:L2}, we estimate the second term as follows
\begin{align*}
	 \int_{0}^t e^{\int_{s}^t(U_2 (\tau) - Q_2(\tau))\, d\tau}L_2(s)\, ds  &\leq 2\,\int_{0}^{t} \left(\frac{t+3E/b_1}{s+3E/b_1}\right)^{\alpha} \, ds \\
	 &= \frac{2}{\alpha-1} \, \left[  (0+3E/b_1) \, \left(\frac{t+3E/b_1}{0+3E/b_1}\right)^{\alpha}-(t+3E/b_1)\right].
\end{align*}
 Putting these estimates together, we obtain
\[
H_2(t)\leq \frac{2}{\alpha-1} \, \left[  3E/b_1 \, \left(\frac{t+3E/b_1}{3E/b_1}\right)^{\alpha}-(t+3E/b_1)\right]
+\left(\frac{t+3E/b_1}{3E/b_1}\right)^{\alpha} \, H_2 (0).
\]
When $\alpha=1$ one would have a logarithmic correction instead of a power law in the bound above. Since $H_2(t)=F_2^{-1}(t)$, we deduce the following:  
\begin{itemize}
	\item When $\alpha> 1$, we conclude that $F_2 (t)\geq c_2((t+3E/b_1)^{\alpha}+C_2)^{-1}$.
	\item If $\alpha=1$ we have $F_2(t)\gtrsim c_2((t+3E/b_1)\log(t+3E/b_1)+C_2)^{-1}$
	\item When $\alpha<1$, the term in $t$ is dominant and therefore we have that $F_2 (t)\gtrsim c_2((t+3E/b_1)+C_2)^{-1}$.
\end{itemize}
This concludes the proof of the corollary.
%
\end{proof}

Given the result in \Cref{th:main}, it is natural to ask what could be the maximal speed of convergence of $F_n$ with $n>1$.  If the lower bound for $1-F_1$ in \eqref{bd:F2} were sharp, then we can derive sharp lower and upper bounds for the speed of convergence of all the functions $F_n$ for $n$ large enough. This is the content of the following conditional result.

\begin{proposition}
	\label{prop:cond}
	Under the same assumptions as in \Cref{th:main}, suppose that the following inequality were true:
	\begin{equation}
		\label{bd:hypF1}
		1-F_1(t)\leq \frac{c}{t-t_0+C},
	\end{equation}
	for some constants $c,C>0$ and $t>t_0$. Then for any $n$ large enough such that 
	\begin{equation}
		\label{eq:gamman}
		\gamma_n:=\frac{2c}{\sqrt{n}}\left(\frac{1}{\sqrt{n+2}}+\mathbbm{1}_{\{n>2\}}\frac{1}{\sqrt{n+1}}+\mathbbm{1}_{\{n>2\}}\frac{2}{\sqrt{2}}\right)<1,
	\end{equation}  
	the following inequality holds for all $t>t_0$
	\begin{equation}
		\label{bd:lowFn}
		\frac{c_n}{t-t_0+C}\leq F_n(t)\leq \frac{c}{t-t_0+C},
	\end{equation}
	where $c_n$ can be explicitly computed. 
\end{proposition}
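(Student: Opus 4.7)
The upper bound in \eqref{bd:lowFn} is immediate from mass conservation \eqref{eq:consmass}: since $F_n(t) \leq \sum_{k\geq 2} F_k(t) = 1 - F_1(t)$, the hypothesis \eqref{bd:hypF1} yields it directly.

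For the lower bound, the plan is to work with $H_n = 1/F_n$ and the ODE \eqref{eq:dtHn}. Since $C_n, Q_n \geq 0$ (both are sums of products of non-negative quantities), one has the differential inequality
\begin{equation*}
\frac{d}{dt} H_n \leq H_n U_n + L_n.
\end{equation*}
The strategy is then to estimate $U_n$ and $L_n$ using the hypothesis \eqref{bd:hypF1} together with the mass bound $\sum_{k\geq 2} F_k = 1 - F_1$, and to conclude via Grönwall's inequality.

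The key estimate is $U_n(t) \leq \gamma_n/(t-t_0+C) + \mathcal{O}\big(1/(t-t_0+C)^2\big)$, with $\gamma_n$ as in \eqref{eq:gamman}. Inspecting \eqref{def:Qn2}, the only contributions to $U_n$ that are linear in $(1-F_1)$ come from terms in which $F_1$ multiplies a sum of $F_k$'s that telescopes (via mass conservation) to $1-F_1$. The $2/\sqrt{2}$ piece of $\gamma_n$, present only for $n>2$, corresponds to applying $\sum_{k=2}^{n-1} F_k/\sqrt{k} \leq (1-F_1)/\sqrt{2}$ to the leading term $\frac{4}{\sqrt{n}} F_1 \sum_{k=2}^{n-1} F_k/\sqrt{k}$; the pieces $1/\sqrt{n+1}$ and $1/\sqrt{n+2}$ arise analogously from bounds of the form $\sum_{k\geq n+1} F_k/\sqrt{k} \leq (1-F_1)/\sqrt{n+1}$ applied to the remaining subterms of $U_n$. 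Every other piece of $U_n$ is quadratic in $\{F_k\}_{k\geq 2}$, hence of order $(1-F_1)^2 = \mathcal{O}(1/(t-t_0+C)^2)$, which is integrable in time. In parallel, $L_n$ is bounded uniformly by a constant $\Lambda_n$ using $F_1 \leq 1$ and $\sum_{k\geq 2} F_k \leq 1$.

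Grönwall's inequality then yields
\begin{equation*}
H_n(t) \leq e^{\int_{t_0}^t U_n\, d\tau}\, H_n(t_0) + \int_{t_0}^t e^{\int_s^t U_n\, d\tau}\, L_n(s)\, ds.
\end{equation*}
From the above estimate, $\int_s^t U_n\, d\tau \leq \gamma_n \log\!\big((t-t_0+C)/(s-t_0+C)\big) + \mathcal{O}(1)$, so the exponential is controlled by a constant multiple of $\big((t-t_0+C)/(s-t_0+C)\big)^{\gamma_n}$. The condition $\gamma_n < 1$ is precisely what guarantees that $\int_{t_0}^t \big((t-t_0+C)/(s-t_0+C)\big)^{\gamma_n}\, ds$ grows only linearly in $t$ (mirroring the calculation carried out in the proof of \Cref{cor:lowF}); this gives $H_n(t) \leq D_n(t-t_0+C)$ for an explicit $D_n$, and consequently $F_n(t) \geq c_n/(t-t_0+C)$ with $c_n = 1/D_n$. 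The delicate step is the bookkeeping in the estimate on $U_n$: one must isolate exactly the linear-in-$(1-F_1)$ contributions with the correct prefactors (choosing the optimal endpoint value $1/\sqrt{k}$ for each summation range) so that the resulting coefficient takes the precise form \eqref{eq:gamman} rather than a looser bound that would fail to satisfy $\gamma_n < 1$.
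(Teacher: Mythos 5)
Your proposal is correct and follows essentially the same route as the paper. The paper works directly with $F_n$, uses $L_n\leq 4/n$ and the estimate $U_n\leq\tilde\gamma_n(1-F_1)$, then substitutes $G_n=(t+C)^{\gamma_n}F_n$ and applies a comparison principle to the resulting Bernoulli inequality; your passage to $H_n=1/F_n$ plus Gr\"onwall is the exact same computation in transformed variables. One small inaccuracy in your bookkeeping: the $1/\sqrt{n+1}$ and $1/\sqrt{n+2}$ pieces of $\gamma_n$ do \emph{not} arise from linear-in-$(1-F_1)$ contributions of $U_n$; they come from genuinely quadratic terms (bounded by $(1-F_1)^2/\sqrt{n+1}$ and $(1-F_1)^2/\sqrt{n+2}$ respectively, see \eqref{bd:Un1}--\eqref{bd:Un2}) which the paper then folds into $\gamma_n$ using the crude bound $(1-F_1)\leq 1$ — only the $2/\sqrt{2}$ piece is truly linear. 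This makes your statement that ``every other piece of $U_n$ is quadratic, hence $\mathcal{O}(1/(t-t_0+C)^2)$'' mildly inconsistent with claiming those two pieces belong to $\gamma_n$; but since either accounting only makes the effective exponent $\gamma_n$ smaller, the condition $\gamma_n<1$ and the conclusion are unaffected.
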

\begin{proof}
Recall the definitions of $L_n, U_n$ in \eqref{def:Ln}-\eqref{def:Qn2}. Thanks to the bound on the total mass, we deduce that $L_n\leq 4/n$. Therefore, from \eqref{eq:dtFn}  we  get
\begin{equation}
\label{bd:DtFn}
    \frac{d}{d t}F_n\geq-U_nF_n-\frac{4}{n}F_n^2.
\end{equation}
Regarding $U_n$, exploiting the conservation of the mass, we have 
\begin{equation}
	\label{bd:Un1}
	\sum_{k=n+1}^{\infty}\sum_{m=k+1}^{n+k-1}\frac{F_kF_m}{\sqrt{km}}\sqrt{n+k-m}\leq\frac{1}{\sqrt{n+2}}	\sum_{k=n+1}^{\infty}F_k\sum_{m=n+2}^{\infty}F_m\leq\frac{1}{\sqrt{n+2}}(1-F_1)^2. 
\end{equation}
If $n>2$, we have  additional terms in $U_n$, which we bound as follows:
\begin{equation}
	\label{bd:Un2}
	\sum_{k=2}^{n-1}\sum_{m=n+1}^{n+k}\frac{F_kF_m}{\sqrt{km}}\sqrt{n+k-m}\leq\frac{1}{\sqrt{n+1}}	\sum_{k=2}^{n-1}F_k\sum_{m=n+1}^{n+k}F_m\leq\frac{1}{\sqrt{n+1}}(1-F_1)^2,
\end{equation}
and 
\begin{equation}
	\label{bd:Un3}
	2\sum_{k=2}^{n-1}\sum_{m=1}^{k-1}\frac{F_k F_m}{\sqrt{k}}\leq\frac{2}{\sqrt{2}}\left(F_1\sum_{k=2}^{n-1}F_k+\sum_{k=2}^{n-1}F_k \sum_{m=2}^{k-1}F_m\right)\leq \frac{2}{\sqrt{2}}(1-F_1)(F_1+(1-F_1)).
\end{equation}
The upper bound above is the worst in terms of decay in time since it contains a factor $F_1$. Therefore, since $(1-F_1)\leq1$, combining \eqref{bd:Un1}, \eqref{bd:Un2} and \eqref{bd:Un3} we obtain
\begin{equation}
	\label{bd:Un}
	U_n\leq \tilde{\gamma}_n(1-F_1), \qquad \tilde{\gamma}_n=\frac{2}{\sqrt{n}}\left(\frac{1}{\sqrt{n+2}}+\chi_{n>2}\frac{1}{\sqrt{n+1}}+\chi_{n>2}\frac{2}{\sqrt{2}}\right).
\end{equation} 
For simplicity of notation, consider now $t_0=0$ in \eqref{bd:hypF1}. Combining \eqref{bd:DtFn} with \eqref{bd:Un} we obtain 
\begin{equation}
	\frac{d}{d t}F_n\geq  -\frac{\gamma_n}{t+C}F_n-\frac{4}{n}F_n^2,
\end{equation}
where $\gamma_n=\tilde{\gamma}_nc$ was given in \eqref{eq:gamman}. Defining $G_n:=(t+C)^{\gamma_n}F_n$, we find
\begin{equation}
	\frac{d}{dt}G_n\geq-\frac{4}{n(t+C)^{\gamma_n}}G_n^2.
\end{equation}
Since $\gamma_n<1$ by hypothesis, by a comparison principle we get
\begin{equation}
	\frac{1}{G_n (0)} - \frac{1}{G_n (t)}  \geq -\frac{4}{n (1-\gamma_n)} \, \left((t+C)^{1-\gamma_n}-C^{1-\gamma_n}\right),
\end{equation}
which immediately implies 
	\begin{equation}
		\label{bd:prooflowFn}
		F_n(t)\geq \frac{1}{ \frac{4}{n(1-\gamma_n)} \, \left((t+C) -C^{1-\gamma_n} (t+C)^{\gamma_n}\right)\, +(CF_n (0))^{-1}(t+C)^{\gamma_n} }, 
	\end{equation}
whence proving Proposition \ref{prop:cond}, where $c_n$ can be computed from the inequality above.
\end{proof}

\section{Toy model}\label{sec:toy_model}

In this section we discuss the toy model announced in the introduction \eqref{eq:intro_toy_model}. Our main goal is to understand if there could be solutions exhibiting a decay of order $\mathcal{O}(t^{-1})$, which could be possible for particular initial data and would justify the optimality of the lower bound in Theorem \ref{th:main}. Investigating this question directly on the full system \eqref{eq:dtFn} seems hard. For example, a naive ansatz imposing a polynomial decay for $F_n$ is not consistent with the equations, and this is because $F_1$ cannot decay and it behaves different with respect to all the other $F_n$. Therefore, we first aim at reducing the complexity of the system by performing the following reductions:
\begin{itemize}
\item  By \eqref{eq:dtFn}, for $n\geq 2$, $dF_n/dt$ is a weighted sum of products of the form $F_i F_j F_k$. We drop all terms where $\{i,j,k\}\cap \{1\}=\emptyset$.
\item All the remaining terms have at most one factor of $F_1$. In view of \Cref{prop:F1}, we substitute such terms by $1$. 
\end{itemize}
The resulting toy model may be written as
\begin{equation}\label{eq:real_toy_model}
\frac{d}{dt}F_n = 4\,  \frac{F_n}{\sqrt{n}} \, \frac{F_{2n-1}}{\sqrt{2n-1}} - 4\, \frac{F_n}{\sqrt{n}}\, \sum_{k=2}^{n-1} \frac{F_k}{\sqrt{k}}- 2 \left(\frac{F_n}{\sqrt{n}}\right)^2+ 2\sum_{k=n+1}^{\infty} \frac{F_kF_{k+1-n}}{\sqrt{k(k+1-n)}}.
\end{equation}
    The idea behind our approximating toy model is that the leading order terms are dictated by interactions with $F_1$. Indeed, we know that all the mass is converging towards $F_1$ and therefore interactions between $F_j$ with $j\neq 1$ are lower order. In this toy model though, many nice properties such as positivity of the solution cannot be expected to hold anymore.
 On the other hand, the advantage of the equation \eqref{eq:real_toy_model} is that the right-hand side is quadratic. We thus propose the natural self-similar \emph{ansatz} of the form
\[
F_n(t) = \beta_n\frac{\sqrt{n}}{t}, \qquad \text{for } 2\leq n\in \N.
\]
We plug this ansatz into \eqref{eq:real_toy_model} and derive equations for the coefficients $(\beta_n)_{n\geq 2}$
\begin{equation}\label{eq:real_beta}
-\sqrt{n}\, \beta_n = 4\, \beta_n \, \beta_{2n-1}  - 4\, \beta_n \, \sum_{k=2}^{n-1} \beta_k- 2 \beta_n^2+ 2\sum_{k=n+1}^{\infty} \beta_k \beta_{k+1-n}.
\end{equation}
Our goal is to investigate wheter or not there are positive solutions to this toy model, always with the idea in mind of having something consistent with the behavior observed in the full WKE, especially regarding \Cref{thm:support1}-\Cref{thm:support2} about the positivity of the coefficients $\beta_n$. We do not focus too much on the mass and energy properties since one can suitably rescale the time in the self-similar ansatz to adjust the parameters.

For the sake of understanding this toy model, we would like to introduce a suitable truncation and exhibit positive solutions to  truncated system. The aim would be to use such solutions as the starting point of a perturbative argument in the full WKE. First of all, we observe the following:
\begin{remark}
    Consider a solution of the form $\beta_n=0$ for all $n\geq N$. In the case $N=4$, it is straightforward to check that $\beta_2,\beta_3$ must be given by 
\[
\beta_2 = \frac{\sqrt{2}+3\sqrt{3}}{14}>0, \qquad \beta_3=\frac{-2\sqrt{2}+\sqrt{3}}{14}<0,
\]
after imposing $\beta_2,\beta_3\neq 0$. Similarly, if $N=5$ the only real-valued solutions have $\beta_4<0$. For $N=6$, one can numerically compute the four exact (nonzero) real-valued solutions to the system, but none of them lies in $(0,\infty)^4$.
    
These examples suggest that if we truncate brutally all the $n\geq N$, there is no guarantee of finding strictly positive solutions to our system, which is clearly not consistent with \Cref{thm:support1}.
\end{remark}

\subsection{Truncated system}
We consider the truncated system for $\bm{\beta}:=(\beta_2,\ldots,\beta_{N})$ by setting
\begin{equation}\label{eq:beta_zeroes}
\beta_k=0 \quad \mbox{for all}\ N+1 \leq k < 2N,\ \mbox{and}\ k\geq 3N-1.
\end{equation}
We keep $N-1$ functions $\bm{\lambda}:=(\beta_{2N},\beta_{2N+1},\ldots, \beta_{3N-2})$ as \emph{parameters} which are a priori fixed. 
With this in mind, \eqref{eq:real_beta} reads
\begin{equation}\label{eq:beta_truncated_0}
-\sqrt{n}\, \beta_n = 4\, \beta_n \, \beta_{2n-1} - 4\, \beta_n \, \sum_{k=2}^{n-1}\beta_k - 2 \beta_n^2 + 2\sum_{k=n+1}^{3N-2} \beta_k \beta_{k+1-n} \qquad \mbox{for}\ n=2,\ldots, N.
\end{equation}
Exploiting \eqref{eq:beta_zeroes}, we may rewrite this as:
\begin{equation}\label{eq:beta_truncated}
\begin{split}
-\sqrt{n}\, \beta_n = &\ 4\, \beta_n \, \beta_{2n-1}  \mathbbm{1}_{\{2n-1\leq N\}} - 4\, \beta_n \, \sum_{k=2}^{n-1} \beta_k- 2 \beta_n^2\\
& + 2\sum_{k=n+1}^{N} \beta_k \beta_{k+1-n} +  2\sum_{k=2N+1}^{3N-2} \beta_k \beta_{k+1-n}  \qquad \mbox{for}\ n=2,\ldots, N.
\end{split}
\end{equation}
For this system, we have a special solution given by

\[
\begin{split}
\label{def:particular_sol}\bm{\beta}^0  = ( \gamma, 0, \ldots ,0), \qquad 
\bm{\lambda}^0  = (\lambda_1^0 , \lambda_2^0, 0, \ldots,0),
\end{split}
\]
where
\begin{equation}
    \label{def:gamma_toy}
\gamma = \frac{\sqrt{2} + \sqrt{2+ 16\lambda_1^{0}\lambda_2^{0} }}{4}.
\end{equation}
Around this particular solution, we are able to show the existence of many solutions $\bm{\beta}$  to \eqref{eq:beta_truncated} with strictly positive components. Therefore,  we have many solutions for the system \eqref{eq:real_beta} when truncated at $N$ except for the coefficients $2N,2N+1$. This is the content of Theorem \ref{thm:toy_model} which we restate more precisely as follows
\begin{theorem}\label{thm:nontrivial_solution_inhom2} Fix any $N\in\N$ with $N\geq 4$. Fix $\lambda_1^0, \lambda_2^0>0$ such that 
\begin{equation}
    \lambda^0_1\lambda^0_2 > N/4.
\end{equation}
Then there exists some $\delta_0 (N,\lambda_1^0, \lambda_2^0)>0$ such that for all $\delta<\delta_0$, there exists a solution to \eqref{eq:beta_truncated} (which solves \eqref{eq:real_beta} for $n\leq N$) such that $\beta_2,\ldots, \beta_N,\beta_{2N}, \beta_{3N-2}>0$, and $\beta_k=0$ for the rest of $k\in\N\cap [2,\infty)$. Moreover, this solution satisfies
\begin{equation}
    \begin{split}
        \beta_2 & = \gamma + \O ( \sqrt{N} \,\delta\, \max\{ \lambda^0_1,\lambda^0_2\}),\\
        \beta_j & =  \O ( \sqrt{N} \,\delta\, \max\{ \lambda^0_1,\lambda^0_2\}) \qquad j=3,\ldots,N,\\
        \beta_{2N} & = \lambda^0_1 + \O (\delta),\\
         \beta_{2N+1} & = \lambda^0_2 + \O (\delta),\\
         \beta_{j} & = \O(\delta), \qquad j=2N+2,\ldots, 3N-2.
    \end{split}
\end{equation}
\end{theorem}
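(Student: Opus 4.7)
My approach is a quantitative implicit function theorem (IFT) around the explicit base solution $(\bm{\beta}^0, \bm{\lambda}^0)$. First I would verify directly that $(\bm{\beta}^0, \bm{\lambda}^0)$ solves \eqref{eq:beta_truncated}: the $n=2$ equation reduces to $-\sqrt{2}\,\gamma = -2\gamma^2 + 2\lambda_1^0\lambda_2^0$, whose positive root is $\gamma$ as defined in \eqref{def:gamma_toy}, while for each $n = 3, \ldots, N$ every term on the right-hand side vanishes because the only nonzero coordinates of $\bm{\beta}^0$ and $\bm{\lambda}^0$ sit at indices $2$, $2N$, and $2N+1$, and these never pair up in any of the sums (one would need e.g. $\beta_{k+1-n}=\beta_2$ with $\beta_k$ a parameter, forcing $k=n+1$, which lies in the truncated-zero range $[N+1,2N-1]$ for $n\geq 3$).

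Next I would write the system as $F(\bm{\beta}, \bm{\lambda}) = 0$ and compute the Jacobian $J := D_{\bm{\beta}} F(\bm{\beta}^0, \bm{\lambda}^0)$. A direct computation exploiting the sparsity of $\bm{\beta}^0$ shows that $J$ is upper bidiagonal with diagonal entries $J_{n,n} = \sqrt{n} - 4\gamma$, superdiagonal $J_{n,n+1} = 2\gamma$ for $n = 3, \ldots, N-1$, and the single exceptional entry $J_{2,3} = 6\gamma$; every other entry vanishes because it would involve a product with some $\beta_k^0 = 0$ for $k \geq 3$. The hypothesis $\lambda_1^0\lambda_2^0 > N/4$ implies $4\gamma > \sqrt{N}$, so every diagonal entry is strictly negative and $J$ is invertible; moreover $J^{-1}$ admits an explicit back-substitution formula with entries of definite sign and controllable size.

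With $J$ invertible I would apply the quantitative IFT: setting $\bm{\lambda} = \bm{\lambda}^0 + \delta\bm{\mu}$ with $\|\bm{\mu}\|_\infty \leq 1$ and $\bm{\beta} = \bm{\beta}^0 + \bm{\eta}$, the system becomes the fixed-point equation
\[
\bm{\eta} = -J^{-1}\bigl(D_{\bm{\lambda}} F(\bm{\beta}^0, \bm{\lambda}^0)\,\delta\bm{\mu} + \mathcal{R}(\bm{\eta}, \delta\bm{\mu})\bigr),
\]
where $\mathcal{R}$ collects the quadratic and mixed Taylor remainders of $F$. Banach's fixed point theorem provides a unique small $\bm{\eta}$ for all $\delta < \delta_0(N, \lambda_1^0, \lambda_2^0)$. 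Computing $D_{\bm{\lambda}} F(\bm{\beta}^0, \bm{\lambda}^0)$ shows that its entries are $O(\max\{\lambda_1^0, \lambda_2^0\})$ and supported on only a few near-diagonal rows; propagating this linear source through the bidiagonal $J^{-1}$ produces the $O(\delta\max\{\lambda_1^0,\lambda_2^0\})$ bound on $\eta_2$ and the $O(\sqrt{N}\,\delta\max\{\lambda_1^0,\lambda_2^0\})$ bounds on $\eta_n$ for $n \geq 3$, matching the announced rates (the factor $\sqrt{N}$ encodes the worst-case amplification when $4\gamma$ approaches $\sqrt{N}$).

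The hardest part is the last step, ensuring strict positivity of all $\beta_j$ for $j = 3, \ldots, N$. The plan is to choose $\bm{\mu}$ with positive auxiliary components $\mu_j > 0$ for $j = 3, \ldots, N-1$: since $J$ has negative diagonal and positive superdiagonal, $J^{-1}$ has entries all of the same sign, and a componentwise non-negative source produces a componentwise non-negative leading-order perturbation, strictly positive wherever at least one source component is. The most delicate coordinate is $\eta_N$: the leading-order source $v_N$ vanishes identically because the equation $F_N$ decouples from the parameters at the base point (every term $\beta_k \beta_{k+1-N}$ in the parameter sum has $k+1-N$ in the truncated-zero range). Its sign must therefore be controlled at quadratic order via $\mathcal{R}$, using the fact that $\eta_{N-1}, \eta_{N-2}$ have already been shown strictly positive; this sign analysis, together with a careful tuning of the signs of $\mu_1, \mu_2$, is the main technical hurdle.
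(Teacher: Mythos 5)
Your approach is the same as the paper's: implicit function theorem around the base solution $(\bm{\beta}^0,\bm{\lambda}^0)$, followed by a positivity argument via the sign structure of the Jacobians. You have also spotted two genuine computational discrepancies with the paper's displayed Jacobians: the $(1,2)$ entry of $J_{\bm{\beta}}f$ at the base point is $-6\gamma$, not $-2\gamma$, because the term $-4\beta_2\beta_3$ also contributes for $n=2$; and, far more importantly, the last row of $J_{\bm{\lambda}}f$ is identically zero, not $(0,\ldots,0,-\lambda_1^0)$ as in \eqref{eq:jacobian_lambda2}, because for $n=N$ every factor $\beta_{k+1-N}$ in the parameter sum $\sum_{k=2N+1}^{3N-2}\beta_k\beta_{k+1-N}$ has index $k+1-N\in[N+2,2N-1]$, which lies in the truncated-zero range.

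However, the resolution you propose for the $\eta_N$ coordinate cannot work, because the obstruction is not a cancellation at linear order that could be repaired at quadratic order: it is structural. Since $f_N$ contains \emph{no} $\bm{\lambda}$-dependence at all (not merely none at the base point), and since $\beta_{2N-1}$ is truncated to zero, the $N$-th equation factors exactly as
\begin{equation*}
f_N(\bm{\beta},\bm{\lambda}) \;=\; \beta_N\Bigl(2\beta_N + 4\sum_{k=2}^{N-1}\beta_k - \sqrt{N}\Bigr).
\end{equation*}
The hypothesis $\lambda_1^0\lambda_2^0>N/4$ gives $4\gamma>\sqrt{N}$, so the second factor is strictly positive at the base point and hence in a whole neighborhood. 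The unique implicit branch produced by the IFT therefore satisfies $\beta_N(\bm{\lambda})\equiv 0$ for all $\bm{\lambda}$ near $\bm{\lambda}^0$. No Taylor remainder $\mathcal{R}$, no sign tuning of $\mu_1,\mu_2$, and no choice of positive source $\bm{\mu}$ can make $\beta_N$ strictly positive: the $N$-th component of the IFT solution is the zero function, period. This also means the paper's Step 3 is incorrect as written (the last component of $J_{\bm{\lambda}}\bm{\beta}\cdot(\bm{\lambda}-\bm{\lambda}^0)$ is not strictly positive but zero), so the theorem's conclusion $\beta_N>0$ is not established by either argument; at best the construction yields $\beta_2,\ldots,\beta_{N-1}>0$ and $\beta_N=0$ (which does still solve \eqref{eq:real_beta} for $n\leq N$). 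To obtain $\beta_N>0$, one would have to change the truncation so that the $n=N$ equation genuinely couples to the parameters.
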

\begin{proof}
The idea of the proof is to construct the positive solutions by using the implicit function theorem for a map whose zeros are solutions to \eqref{eq:beta_truncated_0}. We have to carefully set the parameters in the special solution \eqref{def:particular_sol} in order to guarantee the positivity of the new solution. The proof is divided into four steps.

\medskip 
\noindent
\textbf{Step 1.}
Consider the map
\[
\begin{split}
f: \R^{N-1}\times & \R^{N-1}   \longrightarrow \R^{N-1} \\
f(\bm{\beta},\bm{\lambda}) & = (f_n (\bm{\beta},\bm{\lambda}) )_{n=2}^{N}
\end{split}
\]
where
\begin{equation}
\begin{split}
f_n (\bm{\beta},\bm{\lambda}) =& \ 2 \beta_n^2 + 4\, \beta_n \, \sum_{k=2}^{n-1} \beta_k -4\, \beta_n \, \beta_{2n-1}  \mathbbm{1}_{\{2n-1\leq N\}}\\
& - 2\sum_{k=n+1}^{N} \beta_k \beta_{k+1-n} -2  \sum_{k=2N+1}^{3N-2} \beta_k \beta_{k+1-n} -  \sqrt{n} \beta_n
\end{split}
\end{equation}
 and 
\[
\begin{split}
\bm{\beta} & = ( \beta_2,\ldots,\beta_N)\\
\bm{\lambda} & = (\lambda_1 , \ldots, \lambda_{N-1})= ( \beta_{2N},\ldots,\beta_{3N-2}).
\end{split}
\]
Notice that $f_n(\bm{\beta},\bm{\lambda})=0$ for all $2\leq n\leq N$ corresponds to solution  to \eqref{eq:beta_truncated_0}. We thus consider the special point:
\[
\begin{split}
\bm{\beta}^{0} & = ( \gamma, 0, \ldots ,0)\\
\bm{\lambda}^{0} & = (\lambda_1^{0} , \lambda_2^{0}, 0, \ldots,0),
\end{split}
\]
where $\gamma$ is defined in \eqref{def:gamma_toy}.
We know that $f (\bm{\beta}^{0},\bm{\lambda}^{0} )=0$. Moreover, the Jacobian matrix at this point is non-singular. More precisely, 
\begin{equation}\label{eq:jacobian2}
J_{\beta} f (\bm{\beta}^{0},\bm{\lambda}^{0} ) = \mbox{diag}\left( 4\gamma -  \sqrt{n}\right)_{2\leq n\leq N} + \left( -2 \gamma\, \delta_{(i,j)=(i,i+1)}\right)_{1\leq i,j\leq N-1}.
\end{equation}
Notice that this  is an upper triangular  matrix, meaning that it is invertible provided
\[
4\gamma -  \sqrt{n} \neq 0 \qquad \mbox{for all}\ n=2,\ldots, N-1.
\]
For reasons that will be clear later, we impose that 
\begin{equation}\label{eq:cond_jacobian}
\gamma >  \frac14\sqrt{N},
\end{equation}
which implies that every diagonal entry in \eqref{eq:jacobian2} is strictly positive.

By the Implicit Function Theorem, there exist $\varepsilon,\delta>0$ such that $\bm{\beta}$ can be written as a smooth function of $\bm{\lambda}$ in small neighborhoods of our zero, i.e.
\[
\begin{split}
\bm{\beta}:  B(\bm{\lambda}^{0}, \delta) & \longrightarrow B(\bm{\beta}^{0},\varepsilon)\\
\bm{\lambda} & \longmapsto \bm{\beta}(\bm{\lambda}).
\end{split}
\]
and such that $f(\bm{\beta}(\bm{\lambda}),\bm{\lambda})=0$ for all $\lambda \in B(\bm{\lambda}^{0}, \delta)$. Moreover, we know that 
\begin{equation}\label{eq:jacobian_beta3}
J_{\bm{\lambda}} \bm{\beta} (\bm{\lambda}^{0}) = - J_{\bm{\beta}}f (\bm{\beta}^{0},\bm{\lambda}^{0})^{-1} \, \cdot\,  J_{\bm{\lambda}}f (\bm{\beta}^{0},\bm{\lambda}^{0}).
\end{equation}
\medskip

\textbf{Step 2.} We now compute the Jacobian matrix $J_{\bm{\lambda}} \bm{\beta} (\bm{\lambda}^{0})$. Set 
\begin{equation}\label{eq:bn}
c_n=\frac{2\gamma}{4\gamma-\sqrt{n}} \qquad n=2,\ldots, N-1,
\end{equation}
and let 
\[
A:= \begin{pmatrix}
1 & c_2 & c_2 c_3 & c_2 c_3 c_4 & \ldots & \prod_{j=2}^{N-1} c_j \\
0 & 1 & c_3 & c_3 c_4 & \ldots & \prod_{j=3}^{n-1} c_j\\
0 & 0 & 1 & c_4 & \ldots & \prod_{j=4}^{N-1} c_j\\
\vdots & & \ddots & \ddots & \ddots & \vdots\\
\vdots & & & \ddots & \ddots & c_{N-1}\\
0 & & \ldots & & 0 & 1
\end{pmatrix}
\]
Then, it is easy to check that 
\begin{equation}
\label{eq:Jbeta-1}
J_{\bm{\beta}}f (\bm{\beta}^{0},\bm{\lambda}^{0})^{-1}= A \cdot \mbox{diag}\left( \frac{1}{4\gamma - \sqrt{n}}\right)_{2\leq n \leq N},
\end{equation}
which is an upper triangular matrix with strictly positive entries in the upper triangle, thanks to \eqref{eq:cond_jacobian} and \eqref{eq:bn}.
Similarly, we may compute
\begin{equation}\label{eq:jacobian_lambda2}
J_{\bm{\lambda}}f (\bm{\beta}^{0},\bm{\lambda}^{0}) =
- \begin{pmatrix}
\lambda_1^{0} & \lambda_2^{0} & 0 & 0 &0 & \ldots & 0 \\
0 & \lambda_1^{0} & \lambda_2^{0} & 0 & 0 &\ldots & 0 \\
0 & 0 & \lambda_1^{0} & \lambda_2^{0} & 0 & \ldots & 0 \\
\vdots & & \ddots & \ddots &  \ddots & & \vdots \\
\vdots & & & \ddots & \ddots &  \ddots & \vdots \\
\vdots & & & & \ddots & \ddots &   \\
0 & \cdots & &   \cdots & &  &  \lambda_1^{0}
\end{pmatrix}.
\end{equation}
Therefore, using \eqref{eq:jacobian_beta3} it is not hard to deduce that  $J_{\bm{\lambda}} \bm{\beta} (\bm{\lambda}^{0})$ is an upper triangular matrix with strictly positive entries on 
the upper triangle.

\medskip

\textbf{Step 3.} We are finally in the position of constructing the solution $\bm{\beta}$ with all positive entries. By the Fundamental Theorem of Calculus, we have that 
\begin{equation}\label{eq:beta_lambda}
\bm{\beta} ( \bm{\lambda}) = \bm{\beta}^{0} + \int_0^1 J_{\bm{\lambda}} \bm{\beta} ( \bm{\lambda}^{0} + t\, (\bm{\lambda}-\bm{\lambda}^{0}))\cdot (\bm{\lambda}-\bm{\lambda}^{0})\, dt 
\end{equation}
where $J_{\bm{\lambda}} \bm{\beta}$ is the Jacobian matrix of $\bm{\beta}$ with respect to $\bm{\lambda}$. 

Let us choose $\bm{\lambda}\in B(\bm{\lambda}^{0}, \delta)$ such that $\bm{\lambda}- \bm{\lambda}^{0}\in (0,\infty)^{N-1}$. Since the entries of the matrix $J_{\bm{\lambda}} \bm{\beta} (\bm{\lambda}^{0})$ are  strictly positive in the upper triangle, we have that 
\[
[J_{\bm{\lambda}} \bm{\beta} ( \bm{\lambda}^{0} + t\, (\bm{\lambda}-\bm{\lambda}^{0}))\cdot (\bm{\lambda}-\bm{\lambda}^{0})]_j \Big |_{t=0} > 0 \qquad \mbox{for all} \quad j=1,\ldots, N-1.
\]
By continuity of the Jacobian matrix, we can extend this positivity to any $t\in [0,1]$ as long as $\bm{\lambda}-\bm{\lambda}^{0}$ is small enough (i.e. by potentially making $\delta>0$ smaller). By \eqref{eq:beta_lambda}, this implies that 
\[
[\bm{\beta} ( \bm{\lambda}) ]_j > 0 \qquad \mbox{for all} \quad j=1,\ldots, N-1.
\]
\medskip

\textbf{Step 4.} 
Let us further impose
\begin{equation}\label{eq:cond_lambda}
    \lambda_1^0 \lambda_2^0>\frac14 N
\end{equation}
in order to guarantee that $\gamma>\sqrt{N}/2$. This implies that $c_n$ in \eqref{eq:bn} satisfies $c_n\leq 1$ and therefore the entries of the matrix in \eqref{eq:Jbeta-1} are of size $1/\sqrt{N}$. Hence, the entries of the matrix $J_{\bm{\lambda}} \bm{\beta} (\bm{\lambda}^{0})$, see  \eqref{eq:jacobian_beta3}, have size $\|{\boldsymbol{\lambda}^{0}}\|_{\infty}/\sqrt{N}$. Therefore, by summing up at most $N$-terms of size $\|{\boldsymbol{\lambda}^{0}}\|_{\infty}/\sqrt{N}$, we infer 
\[
\norm{J_{\bm{\lambda}} \bm{\beta} ( \bm{\lambda}^{0} ))\cdot (\bm{\lambda}-\bm{\lambda}^{0})}_{\infty}\lesssim \sqrt{N} \, \max\{\lambda_1^{0},\lambda_2^{0}\}\, \delta.
\]
By the continuity of the Jacobian matrix, one can arrange:
\[
\norm{J_{\bm{\lambda}} \bm{\beta} ( \bm{\lambda}^{0} + t\, (\bm{\lambda}-\bm{\lambda}^{0})) ))\cdot (\bm{\lambda}-\bm{\lambda}^{0})}_{\infty}\lesssim \sqrt{N} \, \max\{\lambda_1^{0},\lambda_2^{0}\}\, \delta \qquad \forall t\in [0,1],
\]
by further reducing $\delta$ if necessary. Therefore, in view of \eqref{eq:beta_lambda}, we choose $\delta$ small enough so that 
\begin{equation}
\begin{split}
\bm{\beta}(\bm{\lambda})_1  & =\beta_2 (\bm{\lambda})= \gamma + \O ( \sqrt{N}\norm{\bm{\lambda}^0}_{\infty}\, \delta),\\
\bm{\beta} (\bm{\lambda})_j & = \O ( \sqrt{N}\norm{\bm{\lambda}^0}_{\infty}\, \delta),\qquad j=2,\ldots,N-1\\
\bm{\lambda}_1 & = \beta_{2N}=\lambda_1^0 + \O (\delta)\\
\bm{\lambda}_2 & = \beta_{2N+1}=\lambda_2^0 + \O (\delta)\\
\bm{\lambda}_j & =\beta_{2N+j-1} = \O (\delta) \qquad j=3,\ldots, N-1,
\end{split}
\end{equation}
where $\gamma$ is defined in \eqref{def:gamma_toy} and we impose \eqref{eq:cond_lambda}. This concludes the proof.
\end{proof}

\section{Derivation of the discrete system}\label{sec:appendix}
	
We are going to derive the equations of $F_n$ by the weak formulation \eqref{eq:WKE}. As test functions, we choose
\begin{equation}
	\label{def:phin}
	\varphi_n(\om)= \chi_{(n-1/2,n+1/2)}, \qquad \text{for }  n\in \mathbb{N}\setminus \{0\}.
\end{equation}
where the $\chi$ are $C_c^{\infty}(\R)$ functions supported inside intervals of the form $(n-1/2,n+1/2)$ and such that $\varphi_n (n)=1$.

	From the definition of $F_n$, see \eqref{def:Fn}, and \eqref{eq:WKE} we have
	\begin{equation}
		\label{eq:detFn}
\begin{split}
			\de_t F_n&=\int_{\R^3_+}\Phi \frac{g_1g_2g_3}{\sqrt{\omega_1\omega_2\omega_3}}[\varphi_{n,4}+\varphi_{n,3}-\varphi_{n,1}-\varphi_{n,2}]d\om_1d\om_2d\om_3=:\mathcal{I}[g,\varphi_n],\\
			\om_4&=\om_1+\om_2-\om_3.
\end{split}
	\end{equation}	
	Notice that we always have $\om_3\neq \om_2$ since otherwise also $\om_4=\om_1$  and the integrand above vanishes. Analogously, we have $\om_4\neq \om_1$.
	
	We have to distinguish several cases depending on the values of $\varphi_{n,1}$.
	
	 \medskip $\bullet$ \textbf{Case} $\varphi_{n,1}=\varphi_{n,2}=1$. 
	First notice that we have $\varphi_{n,3}=\varphi_{n,4}=0$. Indeed, if $\varphi_{n,3}=1$, this implies $\omega_4=\omega_1+\omega_2-\om_3=n$, meaning that $\varphi_{n,4}=1$. But if $\varphi_{n,i}=1$ for $i=1,\dots,4$ the integrand is zero. Therefore we have $\varphi_{n,3}=\varphi_{n,4}=0$. 
	
	When $\om_3<n$ then $\Phi=\sqrt{\om_3}$, hence
	\begin{equation}
		\label{eq:11213<n}
		\mathcal{I}[g,\varphi_n\mathbbm{1}_{\{\varphi_{n,1}=\varphi_{n,2}=1\}\cap \{\om_3<n\}}]=-\frac{2}{n}F_n^2\sum_{k=1}^{n-1}F_k.
	\end{equation}
	For $\om_3>n$ we have $\Phi=\sqrt{\om_4}=\sqrt{2n-\om_3}$, therefore
	\begin{equation}
		\label{eq:11213>n}
		\mathcal{I}[g,\varphi_n\mathbbm{1}_{\{\varphi_{n,1}=\varphi_{n,2}=1\}\cap \{\om_3>n\}}]=-\frac{2}{n}F_n^2\sum_{k=n+1}^{2n-1}\frac{F_k\sqrt{2n-k}}{\sqrt{k}}.
	\end{equation}

	\medskip $\bullet$ \textbf{Cases $\varphi_{n,1}=1, \ \varphi_{n,2}=0$ or $\varphi_{n,1}=0, \ \varphi_{n,2}=1$}. 
	In view of the symmetry of the integrals, the two cases under consideration are equal. If $\varphi_{n,3}=1$, then $\om_4=\om_2$ meaning that $\varphi_{n,4}=0$. But if $\varphi_{n,1}=\varphi_{n,3}=1$ and $\varphi_{n,2}=\varphi_{n,4}=0$ then the integrand is zero. Analogously when $\varphi_{n,4}=0$. Hence we only have to consider $\varphi_{n,3}=\varphi_{n,4}=0$. 
	
	We use the following convention for the indices in this case $$k=\om_2 \text{ and } m=\om_3.$$ We start with the case $\om_2<n$. When $\om_3>n$ then $\Phi=\sqrt{n+k-m}$ and 
	\begin{equation}
		\label{eq:1120<n3>n}
		\mathcal{I}[g,\varphi_n\mathbbm{1}_{\{\varphi_{n,1}=1, \varphi_{n,2}=0\}\cap \{\om_2<n,\om_3>n\}}]=-\frac{1}{\sqrt{n}}F_n\sum_{k=1}^{n-1}\sum_{m=n+1}^{n+k}\frac{F_kF_m}{\sqrt{km}}\sqrt{n+k-m}.
	\end{equation}
	When $\om_3<\om_2$ one has $\Phi=\sqrt{m}$ meaning that
	\begin{equation}
		\label{eq:1120<n3<2}
		\mathcal{I}[g,\varphi_n\mathbbm{1}_{\{\varphi_{n,1}=1, \varphi_{n,2}=0\}\cap \{\om_3<\om_2<n\}}]=-\frac{1}{\sqrt{n}}F_n\sum_{k=2}^{n-1}\sum_{m=1}^{k-1}\frac{F_kF_m}{\sqrt{k}}.
	\end{equation}
	If $\om_2<\om_3<n$ then $\Phi=\sqrt{k}$ so that
	\begin{equation}
		\label{eq:1120<n3>2}
		\mathcal{I}[g,\varphi_n\mathbbm{1}_{\{\varphi_{n,1}=1, \varphi_{n,2}=0\}\cap \{\om_2<\om_3<n\}}]=-\frac{1}{\sqrt{n}}F_n\sum_{m=2}^{n-1}\sum_{k=1}^{m-1}\frac{F_kF_m}{\sqrt{m}}.
	\end{equation}
	When $\om_2>n$, first consider $\om_3<\om_2$. If $\om_3<n$ then $\Phi=\sqrt{m}$ and
	\begin{equation}
		\label{eq:1120>n3<n}
		\mathcal{I}[g,\varphi_n\mathbbm{1}_{\{\varphi_{n,1}=1, \varphi_{n,2}=0\}\cap \{\om_3<n<\om_2\}}]=-\frac{1}{\sqrt{n}}F_n\sum_{k=n+1}^{\infty}\sum_{m=1}^{n-1}\frac{F_kF_m}{\sqrt{k}}.
	\end{equation}
	For $n<\om_3<\om_2$ one has $\Phi=\sqrt{n}$ hence we get
	\begin{equation}
		\label{eq:1120>3>n}
		\mathcal{I}[g,\varphi_n\mathbbm{1}_{\{\varphi_{n,1}=1, \varphi_{n,2}=0\}\cap \{n<\om_3<\om_2\}}]=-F_n\sum_{k=n+2}^{\infty}\sum_{m=n+1}^{k-1}\frac{F_kF_m}{\sqrt{km}}.
	\end{equation}
	For $\om_3>\om_2>n$ then $\Phi=\sqrt{n+k-m}$ and
	\begin{equation}
		\label{eq:1120<3>n}
		\mathcal{I}[g,\varphi_n\mathbbm{1}_{\{\varphi_{n,1}=1, \varphi_{n,2}=0\}\cap \{n<\om_2<\om_3\}}]=-\frac{1}{\sqrt{n}}F_n\sum_{k=n+1}^{\infty}\sum_{m=k+1}^{n+k-1}\frac{F_kF_m}{\sqrt{km}}\sqrt{n+k-m}.
	\end{equation}
	This concludes all the possibles cases for $\varphi_{n,1}=1, \varphi_{n,2}=0$. In account of the symmetry $\om_1\leftrightarrow\om_2$, we remark again that all the terms appearing here are multiplied by a factor $2$ in  \eqref{eq:dtFn}.
	
	\medskip $\bullet$ \textbf{Case $\varphi_{n,3}=\varphi_{n,4}=1$}
	In this case we also have $\varphi_{n,1}=\varphi_{n,2}=0$ since otherwise the integrand is zero. The indices convention in this case are 
	$$k=\om_2 \text{ and } m=\om_1.$$
	We also have $\om_1+\om_2=2n$, hence, if $\om_1<\om_2$ then $\om_2>n$ and $\Phi=\sqrt{m}$. Since we can always exchange $\om_1$ and $\om_2$ we conclude that 
	\begin{equation}
		\label{eq:3141}
		\mathcal{I}[g,\varphi_n\mathbbm{1}_{\{\varphi_{n,3}=\varphi_{n,4}=1\}}]=\frac{4}{\sqrt{n}}F_n\sum_{m=1}^{n-1}\frac{F_mF_{2n-m}}{\sqrt{2n-m}}.
	\end{equation}

	\medskip $\bullet$ \textbf{Case $\varphi_{n,3}=1,\varphi_{n,4}=0$}
	In this case we know that $\varphi_{n,1}=\varphi_{n,2}=0$ since otherwise the integrand is zero. We again denote $$k=\om_2 \text{ and } m=\om_1.$$
	First we consider $\om_1<\om_2$. In account of the symmetries, the case $\om_2<\om_1$ will be the same. If $\om_2<n$ then $\Phi=\sqrt{k+m-n}$ and 
	\begin{equation}
		\label{eq:31401<2<n}
		\mathcal{I}[g,\varphi_n\mathbbm{1}_{\{\varphi_{n,3}=1, \varphi_{n,4}=0\}\cap \{\om_1<\om_2<n\}}]=\frac{1}{\sqrt{n}}F_n\sum_{k=\lceil \frac{n+1}{2}\rceil}^{n-1}\sum_{m=n+1-k}^{k-1}\frac{F_kF_m}{\sqrt{km}}\sqrt{k+m-n}.
	\end{equation}
	For $\om_1<n<\om_2$ one has $\Phi=\sqrt{m}$ hence 
	\begin{equation}
		\label{eq:31401<n<2}
		\mathcal{I}[g,\varphi_n\mathbbm{1}_{\{\varphi_{n,3}=1, \varphi_{n,4}=0\}\cap \{\om_1<n<\om_2\}}]=\frac{1}{\sqrt{n}}F_n\sum_{k=n+1}^{\infty}\sum_{m=1}^{n-1}\frac{F_kF_m}{\sqrt{k}}.
	\end{equation}
	When $n<\om_1<\om_2$ then $\Phi=\sqrt{n}$, from which we get 
	\begin{equation}
		\label{eq:3140n<1<2}
		\mathcal{I}[g,\varphi_n\mathbbm{1}_{\{\varphi_{n,3}=1, \varphi_{n,4}=0\}\cap \{n<\om_1<\om_2\}}]=F_n\sum_{k=n+2}^{\infty}\sum_{m=n+1}^{k-1}\frac{F_kF_m}{\sqrt{km}}.
	\end{equation}
	The three terms above are multiplied by a factor $2$ in \eqref{eq:dtFn} in view of the symmetry $\om_1 \leftrightarrow \om_2$.
	
	We are left only with the case $\om_1=\om_2$. If $\om_2>n$ then 
	\begin{equation}
		\label{eq:31401=2>n}
		\mathcal{I}[g,\varphi_n\mathbbm{1}_{\{\varphi_{n,3}=1, \varphi_{n,4}=0\}\cap \{\om_1=\om_2>n\}}]=F_n\sum_{k=n+1}^{\infty}\frac{F_k^2}{k}.
	\end{equation}
	When $\om_2<n$ one has
	\begin{equation}
		\label{eq:31401=2<n}
		\mathcal{I}[g,\varphi_n\mathbbm{1}_{\{\varphi_{n,3}=1, \varphi_{n,4}=0\}\cap \{\om_1=\om_2<n\}}]=\frac{F_n}{\sqrt{n}}\sum_{k=\lceil \frac{n}{2}\rceil }^{n-1}\frac{F_k^2}{k}\sqrt{2k-n}.
	\end{equation}

	\medskip $\bullet$ \textbf{Case $\varphi_{n,4}=1, \varphi_{n,3}=0$}
	In this case one has $\varphi_{n,1}=\varphi_{n,2}=0$. We assume $\om_1<\om_2$ as done previously. We again denote $k=\om_2 \text{ and } m=\om_1.$
	
	For $\om_1<\om_2<n$ then $\Phi=\sqrt{k+m-n}$ so that
	\begin{equation}
		\label{eq:41301<2<n}
		\mathcal{I}[g,\varphi_n\mathbbm{1}_{\{\varphi_{n,4}=1, \varphi_{n,3}=0\}\cap \{\om_1<\om_2<n\}}]=\sum_{k=2}^{n-1}\sum_{m=2}^{k-1}\frac{F_kF_mF_{k+m-n}}{\sqrt{kn}}.
	\end{equation}
	For $\om_1<n<\om_2$ one has $\Phi=\sqrt{m}$ hence 
	\begin{equation}
		\label{eq:41301<n<2}
		\mathcal{I}[g,\varphi_n\mathbbm{1}_{\{\varphi_{n,4}=1, \varphi_{n,3}=0\}\cap \{\om_1<n<\om_2\}}]=\sum_{k=n+1}^{\infty}\sum_{m=1}^{n-1}\frac{F_kF_mF_{k+m-n}}{\sqrt{k(k+m-n)}}.
	\end{equation}
	When $n<\om_1<\om_2$ then $\Phi=\sqrt{n}$, from which we get 
	\begin{equation}
		\label{eq:4130n<1<2}
		\mathcal{I}[g,\varphi_n\mathbbm{1}_{\{\varphi_{n,3}=1, \varphi_{n,4}=0\}\cap \{n<\om_1<\om_2\}}]=\sqrt{n}\sum_{k=n+2}^{\infty}\sum_{m=n+1}^{k-1}\frac{F_kF_mF_{k+m-n}}{\sqrt{km(k+m-n)}}.
	\end{equation}
	The three terms above are multiplied by a factor $2$ in \eqref{eq:dtFn} in view of the symmetry $\om_1 \leftrightarrow \om_2$.
	
	We are left only with the case $\om_1=\om_2$. If $\om_2>n$ then 
	\begin{equation}
		\label{eq:41301=2>n}
		\mathcal{I}[g,\varphi_n\mathbbm{1}_{\{\varphi_{n,4}=1, \varphi_{n,3}=0\}\cap \{\om_1=\om_2>n\}}]=\sqrt{n}\sum_{k=n+1}^{\infty}\frac{F_k^2F_{2k-n}}{k\sqrt{2k-n}}.
	\end{equation}
	When $\om_2<n$ we get
	\begin{equation}
		\label{eq:41301=2<n}
		\mathcal{I}[g,\varphi_n\mathbbm{1}_{\{\varphi_{n,4}=1, \varphi_{n,3}=0\}\cap \{\om_1=\om_2<n\}}]=\sum_{k=\lceil \frac{n+1}{2}\rceil}^{n-1}\frac{F_k^2F_{2k-n}}{k}.
	\end{equation}
	
	Therefore, from \eqref{eq:11213<n}-\eqref{eq:41301=2<n} the identity \eqref{eq:dtFn} follows. Notice the crucial cancellations of \eqref{eq:1120>3>n} with \eqref{eq:3140n<1<2} and \eqref{eq:1120>n3<n} with \eqref{eq:31401<n<2}. The proof of \eqref{eq:dtHn} immediately follows by the fact that $\de_t H_n=-F_n^{-2}\de_tF_n$.
	
\subsection*{Acknowledgements}

The authors are thankful to Juan J. L. Vel\'azquez for several helpful discussions and comments. The research of MD was supported by the Swiss State Secretariat for Education, Research and lnnovation (SERI) under contract number MB22.00034 through the project TENSE. The research of both authors was backed by the GNAMPA-INdAM.

\bibliographystyle{siam}
\bibliography{bibDWKE}

\begin{thebibliography}{10}

\bibitem{BGHS}
{\sc T.~Buckmaster, P.~Germain, Z.~Hani, and J.~Shatah}, {\em Onset of the wave
  turbulence description of the longtime behavior of the nonlinear
  {S}chr\"{o}dinger equation}, Invent. Math., 225 (2021), pp.~787--855.

\bibitem{CollotGermain2}
{\sc C.~Collot and P.~Germain}, {\em On the derivation of the homogeneous
  kinetic wave equation}, arXiv:1912.10368,  (2020).
\newblock To appear on Comm. Pure Appl. Math.

\bibitem{DengHani}
{\sc Y.~Deng and Z.~Hani}, {\em On the derivation of the wave kinetic equation
  for {NLS}}, Forum of Mathematics, Pi, 9 (2021), p.~e6.

\bibitem{DengHani3}
{\sc Y.~Deng and Z.~Hani}, {\em Propagation of chaos and the higher order
  statistics in the wave kinetic theory}, arXiv:2110.04565,  (2021).
\newblock To appear in Journal of European Mathematical Society (JEMS).

\bibitem{DengHani4}
\leavevmode\vrule height 2pt depth -1.6pt width 23pt, {\em Derivation of the
  wave kinetic equation: full range of scaling laws}, arXiv:2301.07063,
  (2023).

\bibitem{DengHani2}
\leavevmode\vrule height 2pt depth -1.6pt width 23pt, {\em Full derivation of
  the wave kinetic equation}, Invent. math.,  (2023), pp.~543--724.

\bibitem{DengHani5}
\leavevmode\vrule height 2pt depth -1.6pt width 23pt, {\em Long time
  justification of wave turbulence theory}, arXiv:2311.10082,  (2023).

\bibitem{escobedo2015theory}
{\sc M.~Escobedo and J.~J.~L. Vel\'{a}zquez}, {\em On the theory of weak
  turbulence for the nonlinear {S}chr\"{o}dinger equation}, Mem. Amer. Math.
  Soc., 238 (2015), pp.~v+107.

\bibitem{GIT}
{\sc P.~Germain, A.~Ionescu, and M.-B. Tran}, {\em Optimal local well-posedness
  theory for the kinetic wave equation}, J. Funct. Anal.,  (2020), p.~108570.

\bibitem{HRST}
{\sc A.~Hannani, M.~Rosenzweig, G.~Staffilani, and M.-B. Tran}, {\em On the
  wave turbulence theory for a stochastic {KdV} type equation - generalization
  for the inhomogeneous kinetic limit}, arXiv:2210.17445,  (2022).

\bibitem{Hasselmann1}
{\sc K.~Hasselmann}, {\em On the non-linear energy transfer in a gravity-wave
  spectrum. {I}. {G}eneral theory}, J. Fluid Mech., 12 (1962), pp.~481--500.

\bibitem{Hasselmann2}
\leavevmode\vrule height 2pt depth -1.6pt width 23pt, {\em On the non-linear
  energy transfer in a gravity wave spectrum. {II}. {C}onservation theorems;
  wave-particle analogy; irreversibility}, J. Fluid Mech., 15 (1963),
  pp.~273--281.

\bibitem{KV}
{\sc A.~Kierkels and J.~Vel\'azquez}, {\em On the transfer of energy towards
  infinity in the theory of weak turbulence for the nonlinear {Schr\"odinger}
  equation}, J. Stat. Phys., 159 (2015), pp.~668--712.

\bibitem{KV2}
\leavevmode\vrule height 2pt depth -1.6pt width 23pt, {\em On self-similar
  solutions to a kinetic equation arising in weak turbulence theory for the
  nonlinear schrödinger equation}, J. Stat. Phys., 163 (2016), pp.~1350--1393.

\bibitem{nazarenko2011wave}
{\sc S.~Nazarenko}, {\em Wave turbulence}, vol.~825, Springer Science \&
  Business Media, 2011.

\bibitem{nordheim1928kinetic}
{\sc L.~Nordheim}, {\em On the kinetic method in the new statistics and
  application in the electron theory of conductivity}, Proceedings of the Royal
  Society of London. Series A, Containing Papers of a Mathematical and Physical
  Character, 119 (1928), pp.~689--698.

\bibitem{Peierls}
{\sc R.~Peierls}, {\em Zur kinetischen {T}heorie der {W\"armeleitung} in
  {K}ristallen}, Annalen der Physik, 395 (1929), pp.~1055--1101.

\bibitem{StaffilaniTran}
{\sc G.~Staffilani and M.-B. Tran}, {\em On the wave turbulence theory for
  stochastic and random multidimensional {KdV} type equations},
  arXiv:2106.09819,  (2021).

\end{thebibliography}
 \end{document}